\newtheorem{theorem}{Theorem}[section]
\newtheorem{lemma}{Lemma}[section]
\newtheorem{proposition}{Proposition}[section]
\newtheorem{notation}{Notation}[section]
\newtheorem{definition}{Definition}[section]
\theoremstyle{remark}
\newtheorem{remark}{Remark}[section]
\def\undertilde#1{\mathord{\vtop{\ialign{##\crcr
$\hfil\displaystyle{#1}\hfil$\crcr\noalign{\kern1.5pt\nointerlineskip}
$\hfil\widetilde{}\hfil$\crcr\noalign{\kern1.5pt}}}}}
\setlist[enumerate,2]{label=\arabic*)} \newcommand*\diff{\mathop{}\!\mathrm{d}}
\newcommand{\R}{\mathbb{R}}
\newcommand{\eps}{\epsilon}
\newcommand{\V}{\mathcal{V}}
\newcommand{%
    
    \scalebox{0.75}{\import{./figures/}{.pdf_tex}}
}[1]{%
    
    \scalebox{0.75}{\import{./figures/}{#1.pdf_tex}}
}
\title[$L^2$-stability for shocks without Genuine Nonlinearity]{Relative Entropy Contractions for Extremal Shocks of Nonlinear Hyperbolic Systems without Genuine Nonlinearity}
\author{Jeffrey Cheng}
\address{Department of Mathematics, The University of Texas at Austin, Austin, TX 78712.}
\email{jeffrey.cheng@utexas.edu}
\date{\today}
\thanks{2010 \textit{Mathematics Subject Classification}. 35L65, 35B35, 35L45}
\thanks{\textit{Key words and phrases}. Uniqueness, Stability, Conservation law, Relative entropy, Entropy solution, Entropy condition, Shock wave, Contraction}
\thanks{\textbf{Acknowledgements}: The author thanks his graduate advisor Alexis Vasseur for helpful conversations and advice regarding Section \ref{applications}. This work was partially supported by NSF grants: DMS-1840314 \& DMS-2306852 and a joint NSF-ESPRC grant: DMS-EPSRC-2219434}
\begin{document}

\begin{abstract}
We study extremal shocks of $1$-d hyperbolic systems of conservation laws which fail to be genuinely nonlinear. More specifically, we consider either $1$- or $n$-shocks in characteristic fields which are either concave-convex or convex-concave in the sense of LeFloch. We show that the theory of $a$-contraction can be applied to obtain $L^2$-stability up to shift for these shocks in a class of weak solutions to the conservation law whose shocks obey the Lax entropy condition. Our results apply in particular to the $2 \times 2$ system of nonlinear elastodynamics.
\end{abstract}

\maketitle
\tableofcontents

\section{Introduction \& main results}
We consider a $1$-d hyperbolic system of conservations laws
\begin{align}\label{hs}
u_t+(f(u))_x=0, \indent (t,x) \in \R^+ \times \R,
\end{align}
where $u \in \V \subset \R^n$ is the unknown. We assume that the state space $\V$ is compact and that $f=(f_1, ..., f_n) \in[C^2(\V)]^n$. We assume that the system $\eqref{hs}$ is strictly hyperbolic. That is, we assume that for any $u \in \V$, $f'$ has $n$ distinct real eigenvalues with ordering
\[
\lambda_1(u) < ... < \lambda_n(u).
\]
We denote by $r_1(u), ..., r_n(u)$ the associated right eigenvectors corresponding to the $i$-characteristic field for $i \in \{1, ..., n\}$. Recall that the $i$-characteristic field is called genuinely nonlinear if
\begin{align}\label{gnl}
\lambda_i'(u) \cdot r_i(u) \neq 0.
\end{align}
The majority of the theory for hyperbolic systems of conservation laws has been developed under the assumption that \eqref{gnl} (or the linear degeneracy condition) holds. In these contexts, existence for small-BV solutions has shown via multiple methods and very powerful uniqueness and stability results have been established (see the fundamental classical results in these regards \cite{MR194770}, \cite{MR1489317}, \cite{MR1757395}, \cite{MR2150387}, \cite{MR1723032}). For a comprehensive account see \cite{MR1816648}. In this paper, we will work in contexts where \eqref{gnl} does not hold. Although many important physical systems (compressible Euler, for example) verify \eqref{gnl}, there are many physical applications where \eqref{gnl} is not verified. These include modeling the motion of elastic solids \cite{MR1856989}, phase transitions \cite{MR683192}, \cite{MR1219421}, and heat propagation in crystals \cite{RMS1}. There are much fewer results in these directions, but existence of the semigroup of small-BV solutions has been shown by Ancona \& Marson \cite{MR2044745}, \cite{MR1876651} (see also \cite{MR603391}) and uniqueness was recently established by Bressan \& De Lellis \cite{MR4661213}.
\par We further assume that the system is endowed with a strictly convex entropy $\eta \in C^2(\V)$ which verifies
\begin{align}\label{entropy}
\eta'=q'f', 
\end{align}
for some entropy flux $q \in C^2(\V)$. We will consider solutions to $\eqref{hs}$ which verify the entropy condition
\begin{equation}\label{entropic}
(\eta(u))_t+(q(u))_x \leq 0 \indent t > 0, x \in \R,
\end{equation}
in the sense of distributions. More specifically, we ask that for all $\psi \in C_0^\infty(\R^+ \times \R)$ verifying $\psi \geq 0$:
\[
\int_0^\infty \int_{-\infty}^\infty \bigl(\psi_t(t,x)\eta(u(t,x))+\psi_x(t,x)q(u(t,x))\bigr)\diff x \diff t \geq 0.
\]
Given the entropy $\eta$ and entropy flux $q$, we may define the relative entropy and relative entropy flux as follows:
\begin{align}\label{relatives}
    \eta(u|v):=\eta(u)-\eta(v)-\eta'(v)(u-v), \\
    q(u;v):=q(u)-q(v)-q'(v)(f(u)-f(v)).
\end{align}
The relative entropy $\eta(\cdot|\cdot)$ defines a distance between states which is equivalent to the square of the $L^2$-distance, which is made precise in the following lemma:
\begin{lemma}[from \cite{MR2807139}, \cite{MR2508169}]\label{quadraticentropy}
There exists $c^*, c^{**} > 0$ such that for all $(u,v) \in \V \times \V$:
\begin{equation*}
    c^*|u-v|^2 \leq \eta(u|v)\leq c^{**}|u-v|^2.
\end{equation*}
\end{lemma}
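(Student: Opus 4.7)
The plan is to exploit the fact that the relative entropy $\eta(u|v)$ is, by construction, the second-order Taylor remainder of $\eta$ expanded around $v$ and evaluated at $u$. Concretely, since $\eta \in C^2(\V)$, Taylor's theorem with integral remainder gives
\begin{equation*}
\eta(u) - \eta(v) - \eta'(v)(u-v) = \int_0^1 (1-s)\,(u-v)^{T} \eta''\bigl(v + s(u-v)\bigr)(u-v)\,ds,
\end{equation*}
so that
\begin{equation*}
\eta(u|v) = \int_0^1 (1-s)\,(u-v)^{T} \eta''\bigl(v + s(u-v)\bigr)(u-v)\,ds.
\end{equation*}
This representation reduces the problem to obtaining uniform upper and lower bounds on the symmetric matrix $\eta''(w)$ for $w$ ranging over $\V$.

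Next I would use the two structural hypotheses on $\eta$ and $\V$: strict convexity of $\eta$ and compactness of $\V$. Strict convexity together with $\eta \in C^2$ implies that $\eta''(w)$ is symmetric positive definite for every $w \in \V$, so its smallest eigenvalue $\lambda_{\min}(w)$ is strictly positive and depends continuously on $w$. Compactness of $\V$ then yields a uniform bound
\begin{equation*}
0 < \mu := \min_{w \in \V} \lambda_{\min}(\eta''(w)), \qquad M := \max_{w \in \V} \lambda_{\max}(\eta''(w)) < \infty.
\end{equation*}
Since $\V$ is convex (or at least since we apply the Taylor formula along a segment — in the nonconvex case one must restrict to $u,v$ close enough, but in standard treatments $\V$ is taken convex, or one argues locally and patches), the integration path $v + s(u-v)$ remains inside $\V$, so $\mu |u-v|^2 \leq (u-v)^T \eta''(v+s(u-v))(u-v) \leq M |u-v|^2$ for every $s \in [0,1]$.

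Finally I would plug these bounds into the integral representation and compute $\int_0^1 (1-s)\,ds = \tfrac12$, obtaining
\begin{equation*}
\tfrac{\mu}{2}|u-v|^2 \leq \eta(u|v) \leq \tfrac{M}{2}|u-v|^2,
\end{equation*}
so that the lemma holds with $c^* = \mu/2$ and $c^{**} = M/2$. The only real subtlety is the convexity of $\V$ so that the Taylor segment lies inside the domain of definition of $\eta$; if this is not assumed one instead extends $\eta$ smoothly to a convex neighborhood of $\V$ (which is possible since $\eta \in C^2(\V)$) and applies the same argument there. No genuine obstacle arises: the result is a soft consequence of compactness plus strict convexity of the entropy.
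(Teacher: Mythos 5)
Your proof is correct and is essentially the standard argument that the paper defers to in \cite{MR2807139}, \cite{MR2508169}: write $\eta(u|v)$ as the integral Taylor remainder $\int_0^1(1-s)(u-v)^T\eta''(v+s(u-v))(u-v)\,ds$ and bound the Hessian uniformly using strict convexity plus compactness of $\V$. You correctly flag the one genuine subtlety, namely that the segment from $v$ to $u$ must stay in the domain where the Hessian bounds hold, which is handled exactly as you say by taking $\V$ convex or extending $\eta$ to a convex neighborhood on which $\eta''$ remains uniformly positive definite.
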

We will also only consider solutions that verify the strong trace property:
\begin{definition}\label{strongtrace}
Let $u \in L^\infty(\R^+ \times \R)$. We say that $u$ verifies the strong trace property if for any Lipschitzian curve $t \mapsto h(t)$, there exist two bounded functions $u_-, u_+ \in L^\infty(\R^+)$ such that for any $T > 0$:
\begin{align*}
&\lim_{n\to \infty}\int_0^T\sup_{y \in (0,\frac{1}{n})}|u(t,h(t)+y)-u_+(t)|\diff t \\
&=\lim_{n\to \infty}\int_0^T\sup_{y \in (-\frac{1}{n},0)}|u(t,h(t)+y)-u_-(t)|\diff t=0.
\end{align*}
\end{definition}
\begin{notation}\label{notation}
For convenience, we will use later the notation $u_+(t)=u(t,h(t)+)$, and $u_-(t)=u(t,h(t)-)$. 
\end{notation}
\par A special class of solutions to $\eqref{hs}$ are the so-called shock waves $(u_L, u_R)$, which are functions of the form
\begin{align}\label{shock}
S(t,x)=\begin{cases}
u_L & x < \sigma t, \\
u_R & x > \sigma t,
\end{cases}
\end{align}
for some speed $\sigma \in \R$. Recall that an $i-$shock $(u_L, u_R)$ verifies the Lax entropy condition if
\begin{align}\label{lax}
\lambda_i(u_L) \geq \sigma \geq \lambda_i(u_R).
\end{align}
\begin{remark}
For the class of fluxes we are considering, a shock verifies the Lax condition \eqref{lax} if and only if it verifies the ostensibly stronger Liu condition (see Lemma 2.4 in \cite{MR1759195}).
\end{remark}
The goal of the paper is to show an $L^2$-stability property up to shift for Lax shocks in $1$- or $n$-characteristic fields that are not genuinely nonlinear. 

\subsection{Concave-convex and convex-concave characteristic fields}
In this paper, we consider either $1-$ or $n$-characteristic fields that are either concave-convex or convex-concave in the sense of LeFloch \cite{MR1927887}, \cite{MR1759195}. In particular these fields will not verify the genuine nonlinearity condition \eqref{gnl}. More specifically, consider the scalar field
\begin{align*}
m_i(u):=\lambda_i'(u)\cdot r_i(u).
\end{align*}
Then, we have the following definition:
\begin{definition}\label{concave-convex}
The characteristic field $\lambda_i(u)$ is called concave-convex (convex-concave) if the following conditions hold.
\begin{enumerate}
\item $m_i'(u) \cdot r_i(u) > 0$ $(< 0)$. 
\item The set $\mathcal{M}_i:=\{u \in \V|m_i(u)=0\}$ is a smooth affine manifold of dimension $n-1$ such that the vector field $r_i$ is transverse to $\mathcal{M}_i$.
\end{enumerate}
\end{definition}
Examples of systems with characteristic fields of these types include scalar conservation laws with concave-convex/convex-concave fluxes and the equations of nonlinear elastodynamics (see Sections \ref{scl} and \ref{elastodynamics} respectively). 

\subsection{Main results}
Our main theorems give $L^2$-stability for $1$- or $n$-shocks in characteristic fields verifying Definition \ref{concave-convex} in a class of solutions $\mathcal{S}_{weak}$, which is defined as
\begin{align}
\mathcal{S}_{weak}:=\{u \in L^\infty(\R^+ \times \R;\V)|\text{weak solution to } \eqref{hs}\eqref{entropic} \text{ verifying Def. \ref{strongtrace}} \\ \notag
\text{whose shocks verify \eqref{lax} (the Lax entropy condition)}\}
\end{align}
Let us first explain what we can prove for the system of nonlinear elastodynamics:
\begin{align}\label{elastsystem}
\begin{cases}
w_t-v_x=0, & (t,x) \in \R^+ \times \R, \\
v_t+p(w)_x=0.
\end{cases}
\end{align}
The functions $v$ and $w$ represent the velocity and deformation gradient respectively. The stress-strain law $p$ is assumed to have the form:
\begin{align}\label{stresslaw}
p(w)=-w^3-mw, \indent m>0.
\end{align}
If we define $u=(w,v)^T$, the system has strictly convex entropy
\begin{align}
\eta(u)=\frac{v^2}{2}+\frac{1}{4}w^4+\frac{mw^2}{2}.
\end{align}
The $1$-shock curve emanating from a state $u_L$ and the $2$-shock curve emanating from a state $u_R$ are as follows:
\begin{align*}
&S^1_{u_L}(s)=(w_L+s,v) \text{ such that } v=v_L+\sqrt{\frac{-(p(s+w_L)-p(w_L))}{s}}(s), \\
&S^2_{u_R}(s)=(w_R+s,v) \text{ such that } v=v_R-\sqrt{\frac{-(p(s+w_R)-p(w_R))}{s}}(s). \\
\end{align*}
See Section \ref{elastodynamics} for more details about the system \eqref{elastsystem}. What we can prove is the following.
\begin{theorem}\label{main4}
There exists $\eps > 0$ and $0 < a < 1$ $(1<a<\infty)$ such that the following holds. Let $u_L=(w_L, v_L)$ be such that $w_L \neq 0$ ($u_R$ be such that $w_R \neq 0$) and $u_R=S^1_{u_L}(s_0)$ for some $0 < s_0 < \eps$ ($u_L=S^2_{u_R}(s_0)$ for some $0 < s_0 < \eps$). For any $u \in \mathcal{S}_{weak}$, there exists a Lipschitz function $h(t)$ with $h(0)=0$ such that the following dissipation functional verifies:
\begin{align*}
\dot{h}(t)\eta(u_-(t)|u_L)-q(u_-(t);u_L)-a(\dot{h}(t)\eta(u_+(t)|u_R)-q(u_+(t);u_R)) \leq 0,
\end{align*}
for almost every $t \in [0,\infty)$. In particular, there exists $C > 0$ such that
\begin{align*}
||u(t,\cdot+h(t))-S||_{L^2(\R)} \leq C||u_0-S||_{L^2(\R)},
\end{align*}
where $S(x)=u_L$ for $x < 0$ and $S(x)=u_R$ for $x > 0$.
\end{theorem}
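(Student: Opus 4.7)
The plan is to apply the $a$-contraction with shifts framework: I would reduce the theorem to (i) verifying that the elastodynamics system satisfies the structural hypotheses of Definition \ref{concave-convex} for both extremal characteristic fields, and (ii) establishing a pointwise dissipation inequality at the shock traces, which is then promoted to a Lipschitz shift via a standard Filippov-type construction.

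I would first verify Definition \ref{concave-convex}. With $p(w) = -w^3 - mw$, direct computation gives $\lambda_1(u) = -\sqrt{3w^2+m}$, $\lambda_2(u) = \sqrt{3w^2+m}$, and right eigenvectors $r_i(u) = (1, -\lambda_i(u))^T$, so
\[
m_1(u) = -\tfrac{3w}{\sqrt{3w^2+m}}, \qquad m_2(u) = \tfrac{3w}{\sqrt{3w^2+m}}.
\]
Hence $\mathcal{M}_i = \{w = 0\}$ is a smooth affine hyperplane, and $r_i$ is manifestly transverse to it. A further computation yields $m_1'(u)\cdot r_1(u) = -3m(3w^2+m)^{-3/2} < 0$ and $m_2'(u)\cdot r_2(u) = 3m(3w^2+m)^{-3/2} > 0$, so the $1$-field is convex-concave and the $2$-field is concave-convex in the sense of Definition \ref{concave-convex}. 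The hypothesis $w_L \neq 0$ (resp. $w_R \neq 0$) ensures that for $s_0$ small the entire small shock curve $S^1_{u_L}(s)$ (resp. $S^2_{u_R}(s)$) lies strictly on one side of $\mathcal{M}_i$, where $m_i$ has constant sign and (by the remark after \eqref{lax}) the Lax and Liu conditions agree.

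For the $a$-contraction step, given traces $u_-(t), u_+(t)$ I would define a velocity function $V(u_-, u_+)$ coinciding with the Rankine--Hugoniot speed $\sigma$ when $(u_-, u_+) = (u_L, u_R)$. The core estimate is the pointwise inequality
\[
V(u_-, u_+)\eta(u_-|u_L) - q(u_-; u_L) - a\bigl(V(u_-, u_+)\eta(u_+|u_R) - q(u_+; u_R)\bigr) \leq 0,
\]
required for all $(u_-, u_+) \in \V \times \V$ with an appropriate weight $a$ ($0 < a < 1$ in the $1$-shock case, $a > 1$ in the $2$-shock case). Once this inequality is established, the Lipschitz shift $h(t)$ arises by solving $\dot h(t) = V(u_-(t), u_+(t))$ in the Filippov sense; integrating the resulting nonpositive dissipation in time and applying Lemma \ref{quadraticentropy} on each side of $h(t)$ yields the $L^2$-bound.

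The main obstacle is proving the pointwise inequality without the benefit of genuine nonlinearity. In the genuinely nonlinear setting the entropy production along the shock curve scales cubically in the shock strength and perturbations are linear, which pins down a natural choice of $a$; here the scaling is more subtle because the shock curve can approach or depart from $\mathcal{M}_i$, altering the leading order of the entropy dissipation. I would proceed by (i) a local analysis in a neighborhood of $(u_L, u_R)$ using a parametrization of $S^1_{u_L}$ (resp. $S^2_{u_R}$) adapted to the cubic stress law $p(w) = -w^3 - mw$, carefully tracking the sign of $w_L$ (resp. $w_R$) and expanding $\eta(\cdot|\cdot)$ and the Rankine--Hugoniot deficit to sufficient order in $s_0$, and (ii) a global analysis away from $(u_L, u_R)$, which exploits the strict Lax inequality together with compactness of $\V$ to keep $V$ bounded away from the degenerate values. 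The decisive technical point is that the freedom in the weight $a$, combined with the sign condition $w_L \neq 0$ (which keeps the shock curve inside a region of constant-sign $m_i$), exactly compensates for the absence of the cubic scaling that genuine nonlinearity would have provided.
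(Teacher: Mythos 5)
Your verification of the eigenstructure is correct and agrees with the paper ($\mathcal{M}_1=\mathcal{M}_2=\{w=0\}$, the $1$-field convex-concave, the $2$-field concave-convex), and the paper does obtain Theorem \ref{main4} by checking the full Assumptions (B1)/(B2) for \eqref{elastsystem} (with $s_{u_L}^{\natural}=-3w_L/2$, $s_{u_L}^{-\natural}=-3w_L$) and invoking Theorem \ref{main2}. The gap is in your $a$-contraction step. The ``core estimate'' as you state it --- a single pointwise inequality valid for \emph{all} $(u_-,u_+)\in\V\times\V$ with a freely chosen velocity $V(u_-,u_+)$ --- is not available and is not what the method proves. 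At any time when $(u_-,u_+)$ is a genuine discontinuity, the Filippov construction forces $\dot h=\sigma_\pm$ by Rankine--Hugoniot, so there is no freedom in $V$ there; and the inequality $D_{RH}(u_-,u_+)\leq 0$ with that forced speed is only established when the left trace lies in the set $\Pi_a=\{u:\eta(u|u_L)-a\eta(u|u_R)\leq 0\}$, whose diameter is $O(\sqrt a)$. The actual mechanism is: take $a$ small, prove the dissipation estimates (Propositions \ref{mainprop1} and \ref{mainprop2}) only for $u_-\in\Pi_a$, and build the drift $V(u)=\lambda_1(u)-(C^*+2L)\mathds{1}_{\Pi_a^c}(u)$ so that configurations with $u_-\in\Pi_a^c$ either cannot carry a Lax shock (the shift outruns every characteristic) or are absorbed by the large negative drift together with the control of $q(\cdot;\cdot)$ by $\eta(\cdot|\cdot)$ outside $\Pi_a$ (Lemma \ref{qcontrol}). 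Your ``global analysis away from $(u_L,u_R)$ via compactness'' cannot replace this, since compactness yields bounds but no sign.

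Your ``decisive technical point'' --- that $w_L\neq 0$ keeps the relevant shock curves inside a region where $m_i$ has constant sign --- is also false, and it conceals exactly the difficulty the theorem addresses. For \eqref{elastsystem} the Lax-admissible $1$-shocks issued from a left state $u_-$ near $u_L$ (and it is these competitors $(u_-,u_+)$, not just the reference shock $(u_L,u_R)$, that must be controlled) comprise both the branch $s\geq 0$ and the backward branch $s\leq s_{u_-}^{-\natural}=-3w_-$, which lies on the \emph{opposite} side of $\mathcal{M}_1=\{w=0\}$; the hypothesis $w_L\neq 0$ only guarantees $u_L\notin\mathcal{M}_1$ so that $s_{u_L}^{\natural},s_{u_L}^{-\natural}$ exist. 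Controlling the entropy dissipation along that backward branch is precisely where the restriction $0<s_0<\eps$ enters: $\eps$ is fixed by $\eta(u_L|S^1_{u_L}(\eps))=\eta(u_L|S^1_{u_L}(s_{u_L}^{-\natural}))$, so that the reference shock dissipates less relative entropy than any admissible backward-branch competitor (Lemmas \ref{netneg} and \ref{dissipationformula2}). Your outline never uses the hypothesis $s_0<\eps$, which signals that this essential step is missing; without it the claimed inequality can fail for admissible shocks $(u_-,S^1_{u_-}(s))$ with $s\leq s_{u_-}^{-\natural}$.
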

Theorem \ref{main4} is a simple consequence of the more general Theorem \ref{main2}. We note that the parameter $\eps$ in Theorem \ref{main4} may be large; for this reason, we say that Theorem \ref{main4} gives $L^2$-stability for shocks of ``moderate'' strength. See Section \ref{elastodynamics} for a comparison of our result with the $L^1$-stability theory of Ancona \& Marson \cite{MR1876651}.
\par In order to state our most general results, we now give a brief description of the properties of the shock curves we assume. A more precise list of our hypotheses may be found in Section \ref{assumptions}. The regularity we require on the wave curves and related parameters has been shown locally for general systems with characteristic fields verifying Definition \ref{concave-convex} by LeFloch (see Chapter 6, Section 2 in \cite{MR1927887}). Thus, although they may seem lengthy, our hypotheses are very natural for the fluxes we consider. All our assumptions are verified globally for the examples we consider in Section \ref{applications}. The assumptions are marked with asterisks to denote that they are abbreviated versions of the lists found in Section \ref{assumptions}.
\begin{enumerate}
    \item [(A1*)] Assume that the $1-$characteristic field is concave-convex. Then, for any $u_L \not \in \mathcal{M}_1$, there exists a neighborhood $B \subset \V \cap \mathcal{M}_1^c$ such that for any $u \in B$, the $1$-shock curve $S^1_u(s) \in \V$ is defined for $ s \in [0, s_u]$ for some $s_u \leq \infty$. That is, there exists a velocity function $\sigma^1_u(s)$ such that
    \begin{align*}
    f(S^1_u(s))-f(u)=\sigma^1_u(s)(S^1_u(s)-u),
    \end{align*}
    and the shock $(u, S^1_u(s))$ verifies the Lax condition \eqref{lax}. 
    \item [(A2*)] Assume that the $n-$characteristic field is convex-concave. Then, for any $u_R \not \in \mathcal{M}_n$, there exists a neighborhood $B \subset \V \cap \mathcal{M}_n^c$ such that for any $u \in B$, the $n$-shock curve $S^n_u(s) \in \V$ is defined for $ s \in [0, s_u]$ for some $s_u \leq \infty$. That is, there exists a velocity function $\sigma^n_u(s)$ such that
    \begin{align*}
    f(S^n_u(s))-f(u)=\sigma^n_u(s)(S^n_u(s)-u),
    \end{align*}
    and the shock $(S^n_u(s), u))$ verifies the Lax condition \eqref{lax}. 
    \item [(B1*)] Assume that the $1$-characteristic field is convex-concave. Then, for any $u_L \not \in \mathcal{M}_1$, there exists a neighborhood $B \subset \V \cap \mathcal{M}_1^c$ such that for any $u \in B$, the $1$-shock curve $S^1_u(s) \in \V$ is defined for $ s \in [-\infty, \infty]$. That is, there exists a velocity function $\sigma^1_u(s)$ such that
    \begin{align*}
    f(S^1_u(s))-f(u)=\sigma^1_u(s)(S^1_u(s)-u).
    \end{align*}
    Further, there exists a parameter $u \mapsto s_u^{-\natural} < 0$ such that $(u, S^1_u(s))$ verifies the Lax condition \eqref{lax} if and only if $s \in (-\infty, s_u^{-\natural}] \cup [0, \infty)$.
    \item [(B2*)] Assume that the $n$-characteristic field is concave-convex. Then, for any $u_R \not \in \mathcal{M}_n$, there exists a neighborhood $B \subset \V \cap \mathcal{M}_n^c$ such that for any $u \in B$, the $n$-shock curve $S^n_u(s) \in \V$ is defined for $ s \in [-\infty, \infty]$. That is, there exists a velocity function $\sigma^n_u(s)$ such that
    \begin{align*}
    f(S^n_u(s))-f(u)=\sigma^n_u(s)(S^n_u(s)-u).
    \end{align*}
    Further, there exists a parameter $u \mapsto s_u^{-\natural} < 0$ such that $(S^n_u(s), u)$ verifies the Lax condition \eqref{lax} if and only if $s \in (-\infty, s_u^{-\natural}] \cup [0, \infty)$.
    
\end{enumerate}
Now, we can state our first main theorem, which gives a relative entropy contraction (and $L^2$-stability up to shift as a simple corollary) for $1$-shocks in concave-convex fields or $n$-shocks in convex-concave fields.
\begin{theorem}\label{main1}
Consider $\eqref{hs}$ such that $\lambda_1$ is concave-convex and verifies assumptions (A1) ($\lambda_n$ is convex-concave and verifies assumptions (A2)). Let $u_L \in \V \cap \mathcal{M}_1^c$, $u_R=S^1_{u_L}(s_0)$ for some $0 < s_0 < s_{u_L}$ ($u_R \in \V \cap \mathcal{M}_n^c, u_L=S^n_{u_R}(s_0)$ for some $0 < s_0 < s_{u_R}$). Then, there exists $0 < a < 1$ ($1 < a < \infty$) such that the following holds. For any $u \in \mathcal{S}_{weak}$, there exists a Lipschitz function $h(t)$ with $h(0)=0$ such that the following dissipation functional verifies:
\begin{align*}
\dot{h}(t)\eta(u_-(t)|u_L)-q(u_-(t);u_L)-a(\dot{h}(t)\eta(u_+(t)|u_R)-q(u_+(t);u_R)) \leq 0,
\end{align*}
for almost every $t \in [0,\infty)$. In particular, there exists $C > 0$ such that
\begin{align*}
||u(t,\cdot+h(t))-S||_{L^2(\R)} \leq C||u_0-S||_{L^2(\R)},
\end{align*}
where $S(x)=u_L$ for $x < 0$ and $S(x)=u_R$ for $x > 0$.
\end{theorem}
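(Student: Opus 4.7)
By the symmetry $x \mapsto -x$ (which swaps $1$- and $n$-shocks and reverses the sign of the speed), the statement for $n$-shocks in convex-concave fields reduces to that for $1$-shocks in concave-convex fields with $0 < a < 1$, so I restrict to the latter. The plan is to apply the $a$-contraction method. I would define the shift $h(t)$ as the unique Lipschitz Carath\'eodory solution of the ODE $\dot h(t)=V(u_-(t),u_+(t))$, $h(0)=0$, for a bounded Borel velocity $V:\V\times\V\to\R$ designed so that $V(u_L,u_R)=\sigma^1_{u_L}(s_0)$; so that $V(u_-,u_+)$ equals the Rankine--Hugoniot speed of $(u_-,u_+)$ whenever those states are connected by a Lax $1$-shock; and so that elsewhere it takes a default value dominating $\lambda_1$ from above on $\V$. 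Existence of $h$ then follows from Filippov theory combined with Definition \ref{strongtrace}.

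Next I would derive the dissipation inequality. Pairing the entropy inequality \eqref{entropic} for $u$ with a smooth approximation of the indicator of $\{x<h(t)\}$ and subtracting the analogous balance for the constant state $u_L$, then doing the same on $\{x>h(t)\}$ against $u_R$ with weight $a$, one obtains after passing to the limit using Definition \ref{strongtrace},
\begin{align*}
\frac{d}{dt}\int_{-\infty}^{h(t)} \eta(u|u_L)\,dx + a\,\frac{d}{dt}\int_{h(t)}^{\infty} \eta(u|u_R)\,dx \;\leq\; D(u_-(t),u_+(t)),
\end{align*}
where $D(u_-,u_+) := V(u_-,u_+)\bigl[\eta(u_-|u_L) - a\,\eta(u_+|u_R)\bigr] - q(u_-;u_L) + a\,q(u_+;u_R)$. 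By Lemma \ref{quadraticentropy}, the $L^2$-stability bound will then follow by integrating this inequality in time, once the pointwise dissipation bound $D(u_-,u_+)\leq 0$ is established for every $(u_-,u_+)\in\V\times\V$.

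The core of the proof is therefore this pointwise inequality. Using (A1), I would parametrize the $1$-shock curve through $u_L$ by $s$ and study the variation of $\sigma^1_{u_L}(s)$, $\eta(S^1_{u_L}(s)|u_L)$ and $q(S^1_{u_L}(s);u_L)$; the Lax condition \eqref{lax} yields monotonicity of $\sigma^1$ on the admissible branch and controls the sign of the entropy dissipation. The weight $a\in(0,1)$ is then determined, up to a small adjustment, by requiring that the Hessian of $D$ at $(u_L,u_R)$ be non-positive in the directions allowed by the choice of $V$; this fixes $a$ in terms of the relative entropies $\eta(u_R|u_L)$, $\eta(u_L|u_R)$ and the Hessians of $\eta$ at $u_L,u_R$. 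Away from $(u_L,u_R)$, one splits $\V\times\V$ into regions based on whether the traces are close to the reference shock, far from it, or near the inflection manifold $\mathcal{M}_1$, and bounds $D$ separately in each.

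The main obstacle I anticipate is the dissipation analysis near the inflection manifold. Unlike in the genuinely nonlinear case, $m_1$ changes sign along the $1$-shock curve through $u_L$, so the usual comparisons between shock strength, shock speed, and relative entropy degenerate. To overcome this I would rely on LeFloch's fine description of shocks in concave-convex fields (\cite{MR1927887,MR1759195}): under (A1) the Lax-admissible branch is a connected half-curve on which $\sigma^1$ is monotone, and the relative entropy along it is a convex function of a suitable parameter. With these structural facts in hand, $D\leq 0$ reduces to a finite-dimensional comparison of the same nature as in the standard $a$-contraction theory, although the computations will necessarily be more intricate than in the genuinely nonlinear setting.
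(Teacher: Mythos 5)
Your high-level framework (reduce to $1$-shocks, build a shift by a Filippov ODE, differentiate a weighted relative-entropy functional, reduce to a pointwise dissipation inequality at the interface) matches the paper, but there is a genuine gap at the core of the plan: you propose to establish $D(u_-,u_+)\leq 0$ for \emph{every} pair $(u_-,u_+)\in\V\times\V$, with a velocity that tracks the Rankine--Hugoniot speed on Lax shocks and takes a default value \emph{above} $\lambda_1$ elsewhere. This is both unachievable and not what is needed. The paper's mechanism hinges on the set $\Pi_a=\{u:\eta(u|u_L)-a\eta(u|u_R)\leq 0\}$, which for small $a$ is a small neighborhood of $u_L$ (Lemma \ref{piaasymptotics}); the velocity is $V(u)=\lambda_1(u)-(C^*+2L)\mathds{1}_{\Pi_a^c}(u)$, i.e.\ it drops far \emph{below} $\lambda_1$ outside $\Pi_a$. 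The sign matters: for $u_-=u_+=u\in\Pi_a^c$ one has $\eta(u|u_L)-a\eta(u|u_R)>0$, so the drift term $\dot h\,[\eta(u_-|u_L)-a\eta(u_+|u_R)]$ is made very negative precisely by taking $\dot h$ very negative, and it then absorbs the flux terms via the control $|q/a-q|\lesssim|\eta/a-\eta|$ (Lemma \ref{qcontrol}). With your choice (default velocity dominating $\lambda_1$ from above) the drift term has the wrong sign in this region and the inequality fails. The penalized velocity also forces, via the Lax condition, that any actual discontinuity at $h(t)$ has $u_-\in\Pi_a$ and $u_+=S^1_{u_-}(s)$, so the pointwise inequality only ever needs to be proved for $u_-\in\Pi_a$ (Propositions \ref{mainprop1} and \ref{mainprop2}), not on all of $\V\times\V$. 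Relatedly, your determination of $a$ by a Hessian condition at $(u_L,u_R)$ is not the right mechanism: $a$ is taken small (not tuned to second-order data at the reference shock) so that $\Pi_a$ shrinks and the uniform estimates of Lemmas \ref{closetoul} and \ref{dissipationformula2} hold on it.

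On the part that is genuinely new in the non-genuinely-nonlinear setting, your sketch is in the right spirit but too vague to count as a proof. The paper's key computation is the entropy dissipation identity along the shock curve (Lemma \ref{dissipationformula}), which expresses $q(u_+;v)-\sigma_\pm\eta(u_+|v)-q(u_-;v)+\sigma_\pm\eta(u_-|v)$ as $\int_0^s\sigma'_{u_-}(t)\eta(u_-|S^1_{u_-}(t))\,dt$; for Theorem \ref{main1} the whole point is that assumption (A1)(e) guarantees $\sigma'_{u}<0$ on the entire admissible branch $[0,s_u)$ (this is where $s_0<s_{u_L}$ is used, and why the critical case $s_0=s_{u_L}$ is excluded), after which the quantitative estimates of Lemma \ref{dissipationformula2} follow as in the genuinely nonlinear case. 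Your appeal to ``convexity of the relative entropy along the curve'' is not one of the hypotheses (the paper assumes only monotonicity, $\dv{}{s}\eta(u|S^1_u(s))>0$), and no concrete replacement for the dissipation identity is given. Finally, a minor point: uniqueness of the Filippov solution of the shift ODE is not available and not claimed in the paper; only existence (Lemma \ref{shiftexistence}) is used.
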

Our second main theorem gives $L^2$-stability up to shift for $1$-shocks in convex-concave fields or $n$-shocks in concave-convex fields, conditional to a restriction that the shock is of ``moderate'' strength.
\begin{theorem}\label{main2}
Consider $\eqref{hs}$ such that $\lambda_1$ is convex-concave and verifies Assumptions (B1) ($\lambda_n$ is concave-convex and verifies Assumptions (B2)). Then, there exists $\eps > 0$ and $0 < a < 1$ ($1<a<\infty$) such that the following holds. Let $u_L \in \V \cap \mathcal{M}_1^c$, $u_R=S^1_{u_L}(s_0)$ for some $0 < s_0 < \eps$ ($u_R \in \V \cap \mathcal{M}_n^c, u_L=S^n_{u_R}(s_0)$ for some $0 < s_0 < \eps$). For any $u \in \mathcal{S}_{weak}$, there exists a Lipschitz function $h(t)$ with $h(0)=0$ such that the following dissipation functional verifies:
\begin{align*}
\dot{h}(t)\eta(u_-(t)|u_L)-q(u_-(t);u_L)-a(\dot{h}(t)\eta(u_+(t)|u_R)-q(u_+(t);u_R)) \leq 0,
\end{align*}
for almost every $t \in [0,\infty)$. In particular, there exists $C > 0$ such that
\begin{align*}
||u(t,\cdot+h(t))-S||_{L^2(\R)} \leq C||u_0-S||_{L^2(\R)},
\end{align*}
where $S(x)=u_L$ for $x < 0$ and $S(x)=u_R$ for $x > 0$.
\end{theorem}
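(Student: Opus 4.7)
My approach is the $a$-contraction with shifts framework of Vasseur and collaborators (Leger--Vasseur, Krupa--Vasseur, Serre--Vasseur). Both conclusions of Theorem \ref{main2} reduce to a single pointwise dissipation estimate at the shock interface: I aim to produce a Lipschitz velocity $V : \V \times \V \to \R$ and a constant $a$ (with $0 < a < 1$ in the 1-shock case, $1 < a < \infty$ in the $n$-shock case) such that
\[
D_a(u_-, u_+) := V(u_-,u_+)\eta(u_-|u_L) - q(u_-;u_L) - a\bigl(V(u_-,u_+)\eta(u_+|u_R) - q(u_+;u_R)\bigr) \le 0
\]
for all $(u_-, u_+) \in \V \times \V$. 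The shift $h$ is then defined as the Filippov solution of $\dot h(t) = V(u_-(t), u_+(t))$, which is Lipschitz because $V$ is bounded on the compact state space. Combining this pointwise bound with the weak entropy inequality \eqref{entropic} applied to the asymmetric functional $\int_{-\infty}^{h(t)} \eta(u|u_L)\diff x + a \int_{h(t)}^\infty \eta(u|u_R)\diff x$ produces boundary flux contributions precisely of the form $D_a(u_-(t), u_+(t))$; a standard Gronwall argument together with Lemma \ref{quadraticentropy} then delivers the $L^2$-stability corollary.

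\textbf{Choice of $V$ and $a$, and local analysis.} For the velocity I take $V(u_-, u_+) = \sigma^1_{u_-}(s)$ whenever $u_+ = S^1_{u_-}(s)$ is a Lax 1-shock from $u_-$ in the sense of (B1*), extended in a Lipschitz way to the rest of $\V \times \V$; hypothesis (B1*) and the wave-curve regularity of LeFloch \cite{MR1927887} make this well-posed. The weight $a$ is determined by matching coefficients in a second-order expansion of $D_a$ around the reference states, and will satisfy $1 - a = O(s_0)$. For $(u_-, u_+)$ close to $(u_L, u_R)$, a Taylor expansion using the Rankine--Hugoniot identity for the reference shock, the strict Lax inequality $\lambda_1(u_R) < \sigma^1_{u_L}(s_0) < \lambda_1(u_L)$, and the sign condition $m_1' \cdot r_1 < 0$ produces strict negativity of $D_a$ at leading order in the perturbation.

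\textbf{Main obstacle.} The essential difficulty, absent in Theorem \ref{main1}, is the disconnected Lax admissibility set in (B1*): 1-shocks emanating from a perturbed left state $u_-$ have parameter $s \in (-\infty, s_{u_-}^{-\natural}] \cup [0, \infty)$, so a given right state $u_+$ may be connected to $u_-$ by two topologically distinct Lax 1-shocks, one on the near branch $[0, \infty)$ and a far branch $(-\infty, s_{u_-}^{-\natural}]$ crossing the inflection manifold $\mathcal{M}_1$. Establishing $D_a \le 0$ for $u_+$ on the far branch, and more generally for $(u_-, u_+)$ far from $(u_L, u_R)$ on the Hugoniot locus, is the heart of the argument. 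The strategy is to split $\V \times \V$ into regimes: a small neighborhood of $(u_L, u_R)$ handled by the perturbative expansion above; far-branch states $u_+$ handled by the observation that continuity of $u \mapsto s_u^{-\natural}$ and injectivity of the shock curve force such $u_+$ to be a definite distance from $u_R$, so that $a \eta(u_+|u_R)$ provides enough slack to absorb the left-hand contributions; and all remaining pairs handled by a global compactness and monotonicity argument using transversality of $r_1$ to $\mathcal{M}_1$. The moderate-strength threshold $\eps$ is determined by quantifying exactly when these three regimes can be patched together with uniform estimates, and I expect the far-branch analysis to be the most delicate step, since it is where the convex-concave geometry genuinely differs from both the genuinely nonlinear case and the concave-convex case of Theorem \ref{main1}.
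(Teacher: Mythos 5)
Your overall framework (relative entropy with an asymmetric weight $a$ and a Filippov shift) matches the paper's, but three of your concrete commitments would fail, and the hardest step is left unresolved.

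First, the parameter regime for $a$ is wrong. You propose $1-a=O(s_0)$ and a second-order Taylor expansion of the dissipation around $(u_L,u_R)$. The theorem does not assume the shock is weak ($s_0$ may be moderate or large, and genuine nonlinearity fails so the usual quadratic expansions of $\sigma'_u(s)$ degenerate near $\mathcal{M}_1$), so a perturbative matching of coefficients does not close. The paper instead takes $a<a^*$ with $a^*$ \emph{small}, so that the set $\Pi_a=\{\eta(u|u_L)-a\eta(u|u_R)\le 0\}$ has diameter $O(\sqrt{a})$ (Lemma \ref{piaasymptotics}); the negativity of $D_{cont}$ and $D_{RH}$ is then only proved for $u_-\in\Pi_a$, i.e.\ for left traces forced close to $u_L$. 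Second, and relatedly, your claim that one can arrange $D_a(u_-,u_+)\le 0$ for \emph{all} pairs in $\V\times\V$ with $V$ equal to the (Lipschitz-extended) shock speed is too strong and is not what the method delivers. The paper's velocity is $V(u)=\lambda_1(u)-(C^*+2L)\mathds{1}_{\Pi_a^c}(u)$, a function of the single trace value: the large penalty outside $\Pi_a$ makes $\dot h$ slower than every characteristic, which (i) rules out, via the Lax condition, the configurations $u_-\in\Pi_a^c$ with a genuine discontinuity whose left state is far from $u_L$, and (ii) in the continuous case $u\in\Pi_a^c$ lets Lemma \ref{qcontrol} absorb the flux terms by the entropy terms. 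Without this penalization mechanism you have no control over traces outside a neighborhood of $u_L$, and your ``global compactness and monotonicity'' regime has no actual estimate behind it.

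Third, you correctly identify the far branch $s\le s_{u_-}^{-\natural}$ as the heart of the matter, but the mechanism you sketch (injectivity of the shock curve plus ``slack'' from $a\eta(u_+|u_R)$) is not the one that works, and it is precisely here that the threshold $\eps$ is born. The paper's argument runs through the entropy dissipation identity $q(u_+;v)-\sigma_\pm\eta(u_+|v)-q(u_-;v)+\sigma_\pm\eta(u_-|v)=\int_0^s\sigma'_{u_-}(t)\,\eta(u_-|S^1_{u_-}(t))\,\diff t$ (Lemma \ref{dissipationformula}); the observation that the integral over $[s_{u}^{-\natural},0]$ is nonpositive because the endpoint shock $(u,S^1_u(s_u^{-\natural}))$ is itself Lax (hence Liu, hence entropic) admissible (Lemma \ref{netneg}); and the definition of $\eps$ by $\eta(u_L|S^1_{u_L}(\eps))=\eta(u_L|S^1_{u_L}(s_{u_L}^{-\natural}))$, which yields the uniform margin $\eta(u|S^1_u(s_u^{-\natural}))-\eta(u|S^1_u(s_0))>\nu$ needed to make the far-branch dissipation strictly negative, quantified as $-\kappa|\sigma_u(s)-\sigma_u(s_0)|$ in Lemma \ref{dissipationformula2}. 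Without this identity and the resulting quantitative estimates, the case $u_+=S^1_{u_-}(s)$ with $s\le s_{u_-}^{-\natural}$ remains open, and so does the theorem.
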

\begin{remark}
Note that in Theorem \ref{main1} we cannot show stability for the critical case $s=s_{u_L}$. 
\end{remark}
\begin{remark}
The constant $\eps$ in Theorem \ref{main2} is explicit (see the proof of Lemma \ref{dissipationformula2}) and may be large in some cases, which is why we refer to shocks of ``moderate'' strength rather than ``small'' shocks.
\end{remark}
\begin{remark}
In the case of scalar conservation laws, we will be able to remove the moderate strength assumption from Theorem \ref{main2} (see Section \ref{scl}).
\end{remark}
\begin{remark}
Due to Lemma \ref{quadraticentropy}, the $L^2$-stability is a simple consequence of the contraction of the dissipation functionals.
\end{remark}
Theorems \ref{main1} and \ref{main2} are another advancement in Vasseur's theory of $L^2$-stability up to shift ($a$-contraction). The study of $L^2$-stability for hyperbolic systems via the relative entropy method was initiated by Dafermos \cite{MR546634} and DiPerna \cite{MR523630} for Lipschitz solutions. We note that the theory of Dafermos/DiPerna is extremely general and holds for multi-d systems with no assumptions of genuine nonlinearity on the flux. For $1$-d systems verifying \eqref{gnl}, the theory was extend to shocks by Leger \cite{MR2771666} and Kang \& Vasseur \cite{MR3519973}, \cite{MR2508169}. Recently, the theory has been extended even further to include piecewise smooth solutions \cite{MR4176349}, Riemann solutions \cite{MR4184662}, and BV solutions \cite{MR4487515}, \cite{MR4667839}, \cite{isothermal}.
\par This work marks another step towards recovering the generality of the result of Dafermos/DiPerna for discontinuous solutions. It was noticed by the author that the $a$-contraction theory applies in the scalar setting when the flux is concave-convex \cite{concaveconvex}. This work generalizes the ideas in \cite{concaveconvex} to the system setting for the class of fluxes in Definition \ref{concave-convex} introduced by LeFloch \& Hayes \cite{MR1759195}.
\par For scalar conservation laws, if the flux is not convex, it is well-known that a single entropy inequality is not sufficient to determine unique solutions to the Riemann problem. In fact, the so-called non-classical shocks (shocks that fail to verify \eqref{lax}) are also admissible and can be obtained as limits of diffusive-dispersive approximations, a fact which was first noted by Jacobs, McKinney, \& Shearer \cite{MR1318583}.
\par Selection principles that include non-classical shocks have been developed by Abeyaratne \& Knowles \cite{MR1094433}, LeFloch et al. \cite{MR1759195}, \cite{MR1475777}, \cite{MR1219421}, Slemrod \cite{MR683192}, and Shearer et al. \cite{MR1718538}, \cite{MR1357378} in both scalar and system settings. The corresponding undercompressive and overcompressive traveling waves have also been extensively studied \cite{MR1362168}, \cite{MR1324394}, \cite{MR1213739}, \cite{MR1240338}. Nonlinear stability has also been shown for the compressive waves \cite{MR4887752}, \cite{MR1303220}. 
\par In this paper, we will not focus on the non-classical shocks. Instead, our aim is to show that classical shocks possess strong stability properties that the non-classical ones may lack.
\par The article is structured as follows. Firstly, in Section \ref{gen} we state the precise assumptions we make on the system, explain the general ideas of the method of $a$-contraction and state our main propositions, Propositions \ref{mainprop1} \& \ref{mainprop2}. In Section \ref{theorempf}, we will prove Theorems \ref{main1} and \ref{main2} assuming the main propositions. In Section \ref{proppf}, we will prove the main propositions. Finally, in Section \ref{applications}, we will discuss applications of the theory to some specific systems.

\section{General framework}\label{gen}
\subsection{Assumptions on the system}\label{assumptions}
We now give the precise assumptions we make on the $1$- or $n$-characteristic fields. The first set of assumptions concerns either concave-convex $1$-characteristic fields or convex-concave $n$-characteristic fields.
\newline \textbf{Assumptions (A1,2)}
\begin{enumerate}[label=(\alph*)]
    \item (for $1$-shocks) The field $\lambda_1$ is concave-convex.
    \item (for $n$-shocks) The field $\lambda_n$ is convex-concave.
    \item (for $1$-shocks) For any $u_L \not \in \mathcal{M}_1$, there exists a neighborhood $B \subset \V \cap \mathcal{M}_1^c$ such that for any $u \in B$, the $1$-shock curve $S^1_u(s) \in \V$ is defined for $ s \in [0, s_u]$ for some $s_u \leq \infty$. That is, there exists a velocity function $\sigma^1_u(s)$ such that
    \begin{align*}
    f(S^1_u(s))-f(u)=\sigma^1_u(s)(S^1_u(s)-u),
    \end{align*}
    and the shock $(u, S^1_u(s))$ verifies the Lax condition \eqref{lax}. 
    \item (for $n$-shocks) For any $u_R \not \in \mathcal{M}_n$, there exists a neighborhood $B \subset \V \cap \mathcal{M}_n^c$ such that for any $u \in B$, the $n$-shock curve $S^n_u(s) \in \V$ is defined for $ s \in [0, s_u]$ for some $s_u \leq \infty$. That is, there exists a velocity function $\sigma^n_u(s)$ such that
    \begin{align*}
    f(S^n_u(s))-f(u)=\sigma^n_u(s)(S^n_u(s)-u),
    \end{align*}
    and the shock $(S^n_u(s), u))$ verifies the Lax condition \eqref{lax}. 
    \item (for $1$-shocks) The following properties are verified:
        \begin{enumerate}
            \item $S^1_u(0)=u$, $\sigma^1_u(0)=\lambda_1(u)$.
            \item $\dv{}{s}\sigma^1_u(s) < 0$ for $0 \leq s < s_u$.
            \item The functions $(s,u) \mapsto S^1_u(s), \sigma^1_u(s)$ are $C^1$ on $\{(s,u)|u \in B, 0 \leq s \leq s_u\}$ and the function $u \mapsto s_u$ is $C^1$ on $B$
        \end{enumerate}
    \item (for $n-$shocks) The following properties are verified:
        \begin{enumerate}
            \item $S^n_u(0)=u$, $\sigma^n_u(0)=\lambda_n(u)$.
            \item $\dv{}{s}\sigma^n_u(s) > 0$ for $0 \leq s < s_u$. 
            \item The functions $(s,u) \mapsto S^n_u(s), \sigma^n_u(s)$  are $C^1$ on $\{(s,u)|u \in B, 0 \leq s \leq s_u\}$ and the function $u \mapsto s_u$ is $C^1$ on $B$.
        \end{enumerate}
    \item (for $1$-shocks) $\dv{}{s}\eta(u|S^1_u(s)) > 0$.
    \item (for $n$-shocks) $\dv{}{s}\eta(u|S^n_u(s)) > 0$.
    \item (for $1$-shocks) If $(u,v)$ is a shock with velocity $\sigma$ that verifies the Lax condition with velocity $\sigma$ such that $u \in B$ and $\sigma \leq \lambda_1(u)$, then $v=S^1_u(s)$ for some $0 \leq s \leq s_u$.
    \item (for $n$-shocks) If $(v,u)$ is a shock with velocity $\sigma$ that verifies the Lax condition with velocity $\sigma$ such that $u \in B$ and $\sigma \geq \lambda_n(v)$, then $v=S^n_u(s)$ for some $0 \leq s \leq s_u$.
\end{enumerate}
The next set of assumptions concern either $1$-shocks in convex-concave characteristic fields of $n$-shocks in concave-convex characteristic fields. 
\newline \textbf{Assumptions (B1,2)}
\begin{enumerate}[label=(\alph*)]
    \item (for $1$-shocks) The field $\lambda_1$ is convex-concave.
    \item (for $n$-shocks) The field $\lambda_n$ is concave-convex.
    \item (for $1$-shocks) For any $u_L \not \in \mathcal{M}_1$, there exists a neighborhood $B \subset \V \cap \mathcal{M}_1^c$ such that for any $u \in B$, the $1$-shock curve $S^1_u(s) \in \V$ is defined for $ s \in (-\infty, \infty)$. That is, there exists a velocity function $\sigma^1_u(s)$ such that
    \begin{align*}
    f(S^1_u(s))-f(u)=\sigma^1_u(s)(S^1_u(s)-u).
    \end{align*}
    Further, there exists a parameter $u \mapsto s^{-\natural}_u < 0$ such that $(u, S^1_u(s))$ verifies the Lax condition \eqref{lax} if and only if $s \in (-\infty, s_u^{-\natural}] \cup [0, \infty)$. 
    \item (for $n$-shocks) For any $u_R \not \in \mathcal{M}_n$, there exists a neighborhood $B \subset \V \cap \mathcal{M}_n^c$ such that for any $u \in B$, the $n$-shock curve $S^n_u(s) \in \V$ is defined for $ s \in (-\infty, \infty)$. That is, there exists a velocity function $\sigma^n_u(s)$ such that
    \begin{align*}
    f(S^n_u(s))-f(u)=\sigma^n_u(s)(S^n_u(s)-u).
    \end{align*}
    Further, there exists a parameter $u \mapsto s^{-\natural}_u < 0$ such that $(u, S^1_u(s))$ verifies the Lax condition \eqref{lax} if and only if $s \in (-\infty, s_u^{-\natural}] \cup [0, \infty)$.
    \item (for $1$-shocks) The following properties are verified:
        \begin{enumerate}
            \item $S^1_u(0)=u$, $\sigma^1_u(0)=\lambda_1(u)$.
            \item There exists a parameter $s_u^\natural < 0$ such that $\dv{}{s}\sigma^1_u(s) < 0$ for $s \in (s_u^\natural, \infty)$  and $\dv{}{s}\sigma^1_u(s) > 0$ for $s \in (-\infty, s_u^\natural)$. 
            \item The functions $(s,u) \mapsto S^1_u(s), \sigma^1_u(s)$ are $C^1$ on $\{(s,u)|u \in B, s \in \R\}$. and the functions $u \mapsto s_u^{-\natural}, s_u^\natural$ are $C^1$ on $B$.
        \end{enumerate}
    \item (for $n-$shocks) The following properties are verified:
        \begin{enumerate}
            \item $S^n_u(0)=u$, $\sigma^n_u(0)=\lambda_n(u)$.
            \item There exists a parameter $s_u^\natural < 0$ such that $\dv{}{s}\sigma^n_u(s) > 0$ for $s \in (s_u^\natural, \infty)$  and $\dv{}{s}\sigma^n_u(s) < 0$ for $s \in (-\infty, s_u^\natural)$. 
            \item The functions $(s,u) \mapsto S^n_u(s), \sigma^n_u(s)$ are $C^1$ on $\{(s,u)|u \in B, s \in \R \}$ and the functions $u \mapsto s_u^{-\natural}, s_u^\natural$ are $C^1$ on $B$.
        \end{enumerate}
    \item (for $1$-shocks) $\dv{}{s}\eta(u|S^1_u(s)) > 0$ for $s > 0$, $\dv{}{s}\eta(u|S^1_u(s)) < 0$ for $s < 0$.
    \item (for $n$-shocks) $\dv{}{s}\eta(u|S^n_u(s)) > 0$ for $s > 0$, $\dv{}{s}\eta(u|S^n_u(s)) < 0$ for $s < 0$.
    \item (for $1$-shocks) If $(u,v)$ is a shock with velocity $\sigma$ that verifies the Lax condition with velocity $\sigma$ such that $u \in B$ and $\sigma \leq \lambda_1(u)$, then $v=S^1_u(s)$ for some $s \in (-\infty, s_u^{-\natural}] \cup [0, \infty)$.
    \item (for $n$-shocks) If $(v,u)$ is a shock with velocity $\sigma$ that verifies the Lax condition with velocity $\sigma$ such that $u \in B$ and $\sigma \geq \lambda_n(v)$, then $v=S^n_u(s)$ for some $s \in (-\infty, s_u^{-\natural}] \cup [0, \infty)$.
\end{enumerate}
\begin{remark}
Note that the characteristic field $\lambda_1$ verifies the Assumptions (A1) relative to $u_L$ if and only if the characteristic field $\lambda_n$ 
for the system
\begin{align*}
u_t-(f(u))_x=0, 
\end{align*}
verifies the Assumptions (A2) relative to $u_R$ (the same holds for (B1) and (B2)). It is enough to show Theorems \ref{main1} and \ref{main2} for $1-$shocks, as the result for $n$-shocks is obtained by applying the case of $1$-shocks to $\tilde{u}(t,x):=u(t,-x)$. Thus, we will henceforth only treat the cases of $1$-shocks (Assumptions (A1) and (B1)).
\end{remark}
\begin{remark}
The regularity we require on the shock curves and related parameters (Assumptions (A1) (ce) and similarly for the other cases) has been shown for general systems with characteristic fields verifying Definition \ref{concave-convex} by LeFloch (see Chapter 6, Section 2 in \cite{MR1927887} and Lemmas 2.3, 2.4 in \cite{MR1759195}). All our assumptions are verified globally for the examples we consider in Section \ref{applications}.
\end{remark}
\begin{remark}
In the case of Assumption (B1,2), the parameters $s_u^\natural, s_u^{-\natural}$ may not actually exist; the shock speed may be globally decreasing (increasing). The theory still applies in this case in a trivial manner, as then only the shocks with $s \geq 0$ are admissible and the theory is identical to the genuinely nonlinear setting. For ease of exposition, we will not consider this scenario at all.
\end{remark}
\begin{remark}
The remainder of the Assumptions (A1) (gi) are standard for the $a$-contraction theory \cite{MR3537479}, \cite{MR4667839}. The condition (i) has been studied by Zumbrun et al. \cite{MR3283661}, \cite{MR3338447}, who showed that if it is replaced by the seemingly similar condition $\dv{}{s}\eta(S^1_u(s)) > 0$, inviscid instability may occur even in the genuinely nonlinear setting. 
\end{remark}

\subsection{The method of $a$-contraction with shifts}
Let us briefly outline the method of $a$-contraction. Fix a shock $(u_L, u_R)$ with $u_R=S^1_{u_L}(s_0)$. The first step is to notice that an entropy-entropy flux pair $(\eta, q)$, actually gives rise to a full family of entropy-entropy flux pairs. Indeed, for any $b \in \V$, one can verify:
\begin{align}\label{relei}
(\eta(u|b))_t+(q(u;b))_x \leq 0,
\end{align}
in the sense of distribution. Then, for any solution $u \in \mathcal{S}_{weak}$, we define:
\begin{align}\label{functional}
\mathcal{E}(t)=\int_{-\infty}^{h(t)}\eta(u(t,x)|u_L) \diff x+a\int_{h(t)}^\infty \eta(u(t,x)|u_R) \diff x.
\end{align}
Using $\eqref{relei}$ and the strong trace property, we compute
\begin{align}\label{functionalderiv}
 \dv{}{t}\mathcal{E}(t) \leq \dot{h}(t)\eta(u_-(t)|u_L)-q(u_-(t);u_L)-a(\dot{h}(t)\eta(u_+(t)|u_R)-q(u_+(t);u_R)), 
\end{align}
where we have used the notation $u_-, u_+$ from Notation \ref{notation}. Thus, we see that negativity of the right-hand side of \eqref{functionalderiv} will immediately imply $L^2$-stability.
\par Let us analyze \eqref{functionalderiv} some more. We desire to show that the right-hand side is non-positive for some shift $h$. An important subset of state space will be:
\begin{align}
\Pi_a:=\{u \in \V|\eta(u|u_L)-a\eta(u|u_R)\leq 0\}.
\end{align}
Note that $\Pi_a$ is a convex and compact subset of $\V$ (cf. Lemma 3.5 in \cite{MR4667839}). If the weak solution $u$ does not have any discontinuity at $h(t)$ and $u(t,h(t)) \in \partial \Pi_a$, then the term containing $h(t)$ in $\eqref{functionalderiv}$ vanishes. Thus, in order to have the non-positivity, a necessary condition is
\begin{align}\label{dcont}
D_{cont}(u):=a(q(u;u_R)-\lambda_1(u)\eta(v|u_R))-(q(u;u_L)-\lambda_1(u)\eta(u|u_L)) \leq 0.
\end{align}
Our first main proposition shows that for $a$ sufficiently small, $D_{cont} \leq 0$ on $\Pi_a$ (not just the boundary).
\begin{proposition}\label{mainprop1}
There exists $\eps > 0$ such that the following is true. Consider \eqref{hs} such that the characteristic field $\lambda_1$ verifies Assumptions (A1) or (B1). Let $u_L \not \in \mathcal{M}_1$, $u_R=S^1_{u_L}(s_0)$ for some $s_0 > 0$. For assumptions (A1), assume that $s_0 < s_{u_L}$ and for assumptions (B1), assume that $s_0 < \eps$. Then, there exists $a^*>0$ such that for any $a < a^*$, and any $u \in \Pi_a$
\begin{align*}
    D_{cont}(u) \leq 0.
\end{align*}
\end{proposition}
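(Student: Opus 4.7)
The plan is to decompose $D_{cont}(u) = aG(u) - F(u)$, where $F(u) := q(u;u_L) - \lambda_1(u)\eta(u|u_L)$ and $G(u) := q(u;u_R) - \lambda_1(u)\eta(u|u_R)$, and show that on $\Pi_a$ the term $aG$ contributes a strictly negative $O(a)$ piece while $F$ is controlled from below by $-Ca^{3/2}$. The localization of $\Pi_a$ is immediate: since $\eta(\cdot|u_R)$ is bounded on $\V$, the defining inequality of $\Pi_a$ combined with Lemma \ref{quadraticentropy} gives $\Pi_a \subset B(u_L, C\sqrt{a})$.

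For the leading term, evaluate $G$ at $u_L$. Using Rankine--Hugoniot $f(u_L) - f(u_R) = \sigma(u_L - u_R)$, a direct substitution yields
\[
q(u_L;u_R) - \sigma\,\eta(u_L|u_R) = -\bigl[\sigma(\eta(u_L) - \eta(u_R)) - (q(u_L) - q(u_R))\bigr] = -E(u_L,u_R),
\]
where $E \geq 0$ is the entropy dissipation of the Lax shock. Hence $G(u_L) = -E(u_L,u_R) - (\lambda_1(u_L) - \sigma)\,\eta(u_L|u_R)$. The strict monotonicity of $\sigma^1_{u_L}$ on $(0, s_{u_L})$ under (A1), or on $(0, \varepsilon)$ under (B1) with $s_0 < \varepsilon$, forces both $\sigma < \lambda_1(u_L)$ and $E > 0$ strictly when $s_0 > 0$, so $G(u_L) < 0$. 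By continuity and smallness of $\Pi_a$, $G(u) \leq G(u_L)/2 < 0$ on $\Pi_a$ for $a$ small.

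The crux is bounding $F$ from below. The entropy-flux compatibility gives $F(u_L) = 0$ and $\nabla F(u_L) = 0$, and differentiating the compatibility one more time (producing $q'' = \eta'' f' + \eta' f''$) yields
\[
F''(u_L) = \eta''(u_L)\bigl[f'(u_L) - \lambda_1(u_L)\,I\bigr],
\]
which is a symmetric bilinear form because $\eta'' f'$ is symmetric. The right eigenvectors $r_j(u_L)$ of $f'(u_L)$ are $\eta''(u_L)$-orthogonal, so $F''(u_L)$ diagonalizes in the $r_j$-basis with eigenvalues $(\lambda_j - \lambda_1(u_L))\,\eta''(u_L)(r_j,r_j)$, which vanish for $j = 1$ and are strictly positive for $j \geq 2$ by strict hyperbolicity. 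Thus $F''(u_L)$ is positive semidefinite with kernel exactly $\mathrm{span}(r_1(u_L))$.

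The main obstacle is therefore the degeneracy along $r_1(u_L)$, where the quadratic part of $F$ vanishes and $F$ is merely cubic near $u_L$. Writing $u - u_L = s\,r_1(u_L) + v$ with $v$ in the $\eta''(u_L)$-orthogonal complement of $r_1(u_L)$, both $|s|$ and $|v|$ are $O(\sqrt{a})$ on $\Pi_a$. The transverse quadratic part is nonnegative, and the worst-case cubic along $r_1$ contributes only $|s|^3 = O(a^{3/2})$, so
\[
F(u) \geq -C\,a^{3/2}, \qquad D_{cont}(u) = aG(u) - F(u) \leq \tfrac{1}{2}\,a\,G(u_L) + C\,a^{3/2} < 0
\]
for $a < a^* := (|G(u_L)|/(2C))^2$ sufficiently small. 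The smallness $s_0 < \varepsilon$ in (B1) enters only through securing $G(u_L) < 0$ and matching the explicit constants chosen elsewhere in the paper; the Taylor-expansion structure of $F$ at $u_L$ is itself insensitive to $s_0$.
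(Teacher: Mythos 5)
Your decomposition $D_{cont}=aG-F$ and your treatment of $F$ are fine: the identity $F''(u_L)=\eta''(u_L)\bigl(f'(u_L)-\lambda_1(u_L)I\bigr)$, its positive semidefiniteness with kernel $\mathrm{span}(r_1(u_L))$, and the resulting lower bound $F(u)\geq -o(a)$ on $\Pi_a$ via Lemma \ref{piaasymptotics} constitute a legitimate alternative to the paper's route through Lemma \ref{closetoul}. The fatal problem is a sign error in your Step 2. The quantity $q(u_L;u_R)-\sigma\eta(u_L|u_R)$ is \emph{nonnegative}, not nonpositive: your bracket $E=\sigma(\eta(u_L)-\eta(u_R))-(q(u_L)-q(u_R))$ equals $-\bigl(\sigma[\eta]-[q]\bigr)$ with $[\cdot]$ the right-minus-left jump, and the entropy inequality for the shock reads $[q]-\sigma[\eta]\leq 0$, so $E\leq 0$ and $q(u_L;u_R)-\sigma\eta(u_L|u_R)=-E\geq 0$. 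This is exactly Lemma \ref{dissipationformula} applied with $v=u_R$, $u_-=u_L$, $u_+=u_R$ (whose left-hand side is $0$); a Burgers sanity check with $u_L=1$, $u_R=0$, $\eta=u^2/2$ gives $q(u_L;u_R)-\sigma\eta(u_L|u_R)=1/12>0$ while $G(u_L)=-1/6<0$ only because the Lax term wins.

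Consequently $G(u_L)=\bigl(q(u_L;u_R)-\sigma\eta(u_L|u_R)\bigr)-(\lambda_1(u_L)-\sigma)\eta(u_L|u_R)$ is a difference of two \emph{nonnegative} terms, and its negativity --- which is essentially the content of the proposition --- does not follow from ``dissipation plus Lax''. It holds only because the negative Lax term dominates the positive dissipation term, and that domination is precisely where the structural hypotheses enter: writing
\begin{align*}
G(u_L)=-\int_0^{s_0}\sigma'_{u_L}(t)\bigl(\eta(u_L|S^1_{u_L}(t))-\eta(u_L|S^1_{u_L}(s_0))\bigr)\diff t
\end{align*}
(this is Lemma \ref{dissipationformula2} at $s=0$), one needs both $\sigma'_{u_L}<0$ on $(0,s_0)$ (assumption (e)) \emph{and} the monotonicity $\dv{}{s}\eta(u_L|S^1_{u_L}(s))>0$ (assumption (g)) to give the integrand a sign, and the same computation yields the quantitative bound $\leq-\kappa\,|\lambda_1(u)-\sigma_u(s_0)|$ uniformly over $u\in\Pi_a$ that the proof of the proposition ultimately rests on. Your argument never invokes assumption (g), and as written it would ``prove'' $G(u_L)<0$ for any entropic Lax shock of any system, which is not a consequence of the Lax condition alone --- the paper's remark following the assumptions (on replacing (g) by $\dv{}{s}\eta(S^1_u(s))>0$) indicates that such monotonicity hypotheses are not cosmetic. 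To repair the proof, replace Step 2 by the dissipation identity above together with assumptions (e) and (g) (i.e., the content of Lemmas \ref{dissipationformula}--\ref{dissipationformula2}, or Lemma \ref{closetoul}); your $F$-estimate then combines with it correctly.
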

Now consider the case when $u$ has a discontinuity at $h(t)$, that is $u_-(t) \neq u_+(t)$. We can show that in this case, $(u_-(t),u_+(t))$ corresponds to a shock $(u_-,u_+)$ with speed $\sigma_{\pm}=\dot{h}(t)$. By the definition of $\mathcal{S}_{weak}$, it must verify the Lax condition \eqref{lax}. The next proposition is a key step in ensuring that the right-hand side of \eqref{functionalderiv} is non-positive in these cases as well. For any such shock, define
\begin{align}\label{drh}
D_{RH}(u_-, u_+):=a(q(u_+;u_R)-\sigma_\pm \eta(u_+|u_R))-(q(u_-;u_L)-\sigma_{\pm}\eta(u_-|u_L)).
\end{align}
Then, we have
\begin{proposition}\label{mainprop2}
There exists $\eps > 0$ such that the following is true. Consider \eqref{hs} such that the characteristic field verifies Assumptions (A1) or (B1). Let $u_L \not \in \mathcal{M}_1$, $u_R=S_{u_L}^1(s_0)$ for some $s_0 > 0$. For assumptions (A1), assume that $s_0 < s_{u_L}$ and for assumptions (B1), assume that $s_0 < \eps$. Then, there exists $a^* > 0$ such that for any $a < a^*$, and any shock $(u_-, u_+)$, where $u_+=S^1_{u_-}(s)$ verifying \eqref{entropic} \eqref{lax} such that $u_- \in \Pi_a$
\begin{align*}
 D_{RH}(u_-,u_+) \leq 0.
\end{align*}
\end{proposition}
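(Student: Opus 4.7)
By the remark preceding the proposition, it suffices to treat $1$-shocks under (A1) or (B1). My plan is to reduce the estimate to the base case $u_-=u_L$ and then extend by a perturbation argument.

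For the reduction, compactness of $\V$ gives $\eta(u_-|u_R)\leq C$ uniformly, so $u_-\in\Pi_a$ forces $\eta(u_-|u_L)\leq aC$ and hence $|u_--u_L|\leq C'\sqrt{a}$ by Lemma \ref{quadraticentropy}. In the base case $u_-=u_L$, the second term of $D_{RH}$ vanishes (both $q(u_L;u_L)$ and $\eta(u_L|u_L)$ are zero), and the expression reduces to $aE(s)$ with
\begin{align*}
E(s):=q(S^1_{u_L}(s);u_R)-\sigma^1_{u_L}(s)\,\eta(S^1_{u_L}(s)|u_R).
\end{align*}
Since $S^1_{u_L}(s_0)=u_R$, we have $E(s_0)=0$. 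I would establish the sign of $E$ on the admissible set by differentiating the Rankine--Hugoniot identity $f(S^1_{u_L}(s))-f(u_L)=\sigma^1_{u_L}(s)(S^1_{u_L}(s)-u_L)$ in $s$, exploiting the compatibility $\eta'=q'f'$, and using the monotonicity of $\sigma^1_{u_L}$ (assumption (e)(ii)) together with the entropy-monotonicity assumption (g). For case (A1), a second-order expansion near $s_0$ combined with the direct monotonicity for $s$ away from $s_0$ yields $E(s)\leq 0$ on $[0,s_{u_L}]$. For case (B1) the same argument handles the classical branch $s\in[0,\infty)$; on the non-classical branch $s\in(-\infty,s_{u_L}^{-\natural}]$ the restriction $s_0<\eps$ is essential, as it forces $u_R$ to stay close to $u_L$ while $|s|$ remains bounded below, permitting a direct verification that $E(s)\leq -c_0<0$ uniformly on this branch.

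The perturbation step handles the general case $u_-\neq u_L$ with $|u_--u_L|=O(\sqrt{a})$. By $C^1$-regularity of the shock curve in the base state (assumption (e)(iii)), the perturbation of the $a$-term relative to the base case is $O(a\sqrt{a})$. The new piece $-\bigl(q(u_-;u_L)-\sigma^1_{u_-}(s)\,\eta(u_-|u_L)\bigr)$ has magnitude $O(|u_--u_L|^2)=O(a)$; its sign is controlled using the Lax bound $\sigma^1_{u_-}(s)\leq\lambda_1(u_-)$ together with the Rankine--Hugoniot relation. The key is to upgrade the base-case bound to a quantitative version $E(s)\leq -c_0\min(\operatorname{dist}(s,s_0)^2,1)$ on the admissible set, so that the strictly negative leading contribution absorbs the $O(a)$ perturbation once $a$ is chosen small enough.

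The main obstacle, in both steps, is the non-classical branch of case (B1): there the shock strength is bounded below (by $|s_{u_L}^{-\natural}|$), so one cannot perturb locally from the reference shock parameter $s_0$. The smallness $s_0<\eps$ is used essentially to guarantee the base-case strict negativity on this branch, and any perturbation argument there must be done by direct continuity using the $C^1$-dependence on $u$ rather than by Taylor expansion around $s_0$.
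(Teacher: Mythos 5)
There is a genuine gap in your perturbation step, and it occurs precisely in the regime $s\approx s_0$ with $u_-\neq u_L$. Your plan is to absorb the $O(a)$ contribution of the left-hand piece $-\bigl(q(u_-;u_L)-\sigma_\pm\eta(u_-|u_L)\bigr)$ and the $O(a\sqrt a)$ recentring error into the ``strictly negative leading contribution'' $aE(s)\leq -ac_0\min(\operatorname{dist}(s,s_0)^2,1)$. But that leading contribution degenerates to zero as $s\to s_0$, while the perturbation terms do not: at $s=s_0$ with $|u_--u_L|\sim\sqrt a$ you are left with $D_{RH}\leq O(a^{3/2})$ with no sign. The Lax bound $\sigma_\pm\leq\lambda_1(u_-)$ alone cannot save you here, because replacing $\sigma_\pm$ by $\lambda_1(u_-)$ in $-q(u_-;u_L)+\sigma_\pm\eta(u_-|u_L)$ only yields a negative \emph{semi}-definite quadratic form (degenerate in the $r_1$ direction) plus uncontrolled $O(|u_--u_L|^3)$ terms. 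The missing idea is the paper's dichotomy on the shock speed relative to an intermediate speed $\sigma_0\in(\sigma_{LR},\lambda_1(u_L))$ (Lemma \ref{closetoul}): when $\sigma_\pm\leq\sigma_0$ --- which covers $s$ near $s_0$ --- the left-hand piece is not merely ``$O(a)$ with controlled sign'' but \emph{strictly} negative, $-q(u_-;u_L)+\sigma_\pm\eta(u_-|u_L)\leq-\beta\eta(u_-|u_L)\leq-\beta c^*|u_--u_L|^2$, and it is \emph{this} term (not the negativity of $E$) that absorbs all the errors, which after a Young's-inequality absorption of the cross term $|u_--u_L|\,|\sigma_{u_-}(s)-\sigma_{u_-}(s_0)|$ are of size $aC|u_--u_L|^2$. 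Conversely, when $\sigma_\pm>\sigma_0$ one has $|s-s_0|\geq\delta$, the dissipation along the shock curve is uniformly negative ($\leq-\kappa\Theta/4$ after multiplying by $a$), and the possibly positive left-hand piece is bounded by $aC_1(\lambda_1(u_-)-\sigma_0)$, which is made small by pushing $\sigma_0$ toward $\lambda_1(u_L)$. Without this speed dichotomy your argument does not close.

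Two secondary remarks. First, your base-case analysis of $E(s)$ is essentially the paper's Lemma \ref{dissipationformula2} (with $|\sigma_u(s)-\sigma_u(s_0)|$ rather than $|s-s_0|$ as the modulus), and your treatment of the non-classical branch via the condition $s_0<\eps$ is in the right spirit; but note that $\eps$ is \emph{not} a smallness parameter forcing $u_R$ close to $u_L$ --- it is defined by the entropy balance $\eta(u_L|S^1_{u_L}(\eps))=\eta(u_L|S^1_{u_L}(s_{u_L}^{-\natural}))$ and may be large, and the negativity on the branch $s\leq s_u^{-\natural}$ comes from combining the net-negative dissipation over $[0,s_u^{-\natural}]$ (Lemma \ref{netneg}) with the gap $\eta(u|S^1_u(s_u^{-\natural}))-\eta(u|S^1_u(s_0))>\nu$, not from a closeness argument. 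Second, the quantitative estimates must be stated for all base points $u\in\Pi_a$ (uniformly, via Lemma \ref{piaasymptotics}), since the shock $(u_-,u_+)$ lives on the curve through $u_-$, not through $u_L$.
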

We end this section with a lemma that coarsely describes the geometry of $\Pi_a$.
\begin{lemma}[Lemma 3.2 in \cite{MR4176349}]\label{piaasymptotics}
There exists constants $a^*, C > 0$ such that $diam(\Pi_a) \leq C\sqrt{a}$ for $0 < a < a^*$.
\end{lemma}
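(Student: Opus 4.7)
The plan is to reduce the statement to a one-sided bound by showing that every $u \in \Pi_a$ is $O(\sqrt{a})$-close to $u_L$, after which the diameter bound follows from the triangle inequality. The mechanism converting the defining inequality of $\Pi_a$ into a Euclidean distance estimate is Lemma \ref{quadraticentropy}, which identifies $\eta(\cdot | \cdot)$ with the squared $L^2$-distance up to equivalence.

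More concretely, fix $u \in \Pi_a$. By the definition of $\Pi_a$,
\begin{equation*}
\eta(u | u_L) \leq a\, \eta(u | u_R).
\end{equation*}
Applying Lemma \ref{quadraticentropy} to both sides gives
\begin{equation*}
c^* |u - u_L|^2 \leq \eta(u | u_L) \leq a\, \eta(u | u_R) \leq a\, c^{**} |u - u_R|^2.
\end{equation*}
Since $\V$ is assumed compact, there is a constant $M = \operatorname{diam}(\V) < \infty$ bounding $|u - u_R|$. Substituting and taking square roots yields
\begin{equation*}
|u - u_L| \leq M \sqrt{c^{**}/c^*}\, \sqrt{a}.
\end{equation*}

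Applying the triangle inequality to any pair $u_1, u_2 \in \Pi_a$ then gives $|u_1 - u_2| \leq 2M\sqrt{c^{**}/c^*}\,\sqrt{a}$, which is the desired bound with $C := 2M\sqrt{c^{**}/c^*}$. The constant $a^*$ plays no essential role here and may be taken arbitrarily (one can pick, e.g., $a^* = 1$, or any value that makes the resulting diameter sit inside whatever neighborhood is needed later in the paper). There is no serious obstacle in this argument — it is essentially immediate from the quadratic equivalence of the relative entropy and the compactness of the state space — which is presumably why the authors cite it directly from \cite{MR4176349} rather than proving it here.
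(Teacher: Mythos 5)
Your argument is correct: the defining inequality $\eta(u|u_L)\leq a\,\eta(u|u_R)$, the two-sided quadratic bounds of Lemma \ref{quadraticentropy}, and the compactness of $\V$ immediately give $|u-u_L|\leq M\sqrt{c^{**}/c^*}\,\sqrt{a}$ for every $u\in\Pi_a$, and the triangle inequality finishes the proof. The paper itself gives no proof and simply defers to \cite{MR4176349}, but the argument there is exactly this one, so your proposal matches the intended reasoning; your remark that $a^*$ is inessential here is also accurate.
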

For the proof, we refer to \cite{MR4176349}.

\section{Proof of Theorems \ref{main1} \& \ref{main2}}\label{theorempf}
In this section, we construct the shift $h$ and show how Propositions \ref{mainprop1} and \ref{mainprop2} imply Theorems \ref{main1} and \ref{main2}. The proofs of both theorems are exactly the same, so for brevity we will only show how Proposition \ref{mainprop2} implies Theorem \ref{main2}. Our proof of this step is exactly the same as in \cite{MR4667839}; our novel work is relegated to the proofs of Proposition \ref{mainprop1} and \ref{mainprop2}. We break the proof into four steps. \newline
\textbf{\underline{Step 1: Fixing $a^*$ and $\eps$}}
\par Firstly fix the shock $(u_L, u_R)$ and the parameters $a^*$ and $\eps$ sufficiently small so that Propositions  \ref{mainprop1} and \ref{mainprop2} are verified. Then, fix $0 < a < a^*$, which fixes the set $\Pi_a$. \newline
\textbf{\underline{Step 2: Control of $q(\cdot;\cdot)$ by $\eta(\cdot|\cdot)$}}
\par In this step, we provide a crucial lemma that will control the component of $D_{RH}$ involving $q(\cdot;\cdot)$ by the portion involving $\eta(\cdot|\cdot)$ for states outside of $\Pi_a$.
\begin{lemma}[Lemma 4.1 in \cite{MR4667839}]\label{qcontrol}
There exists a constant $C^* > 0$ such that
\begin{align*}
|\frac{1}{a}q(u;u_L)-q(u;u_R)| \leq C^*|\frac{1}{a}\eta(u|u_L)-\eta(u|u_R)|, &\text{ for } u \in \Pi_a^c \\
&\text{ s.t. } \frac{1}{a}q(u;u_L)-q(u;u_R) \leq 0.
\end{align*}
\end{lemma}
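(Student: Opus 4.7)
The plan is to establish this as a compactness-and-continuity based one-sided bound. Let $\Phi(u) := \frac{1}{a}\eta(u|u_L) - \eta(u|u_R)$ and $\Psi(u) := \frac{1}{a}q(u;u_L) - q(u;u_R)$, so that $\Pi_a^c = \{\Phi > 0\}$. On $\Pi_a^c \cap \{\Psi \leq 0\}$ the stated inequality is equivalent to $\Psi + C^*\Phi \geq 0$, which I rewrite as
\begin{align*}
\Psi(u) + C^*\Phi(u) = \frac{1}{a}\bigl(q(u;u_L) + C^*\eta(u|u_L)\bigr) - \bigl(q(u;u_R) + C^*\eta(u|u_R)\bigr).
\end{align*}
Because $\V$ is compact and $\eta, q \in C^2$, a second-order Taylor expansion together with Lemma \ref{quadraticentropy} gives a uniform bound $|q(u;v)| \leq M\eta(u|v)$ on $\V \times \V$, so choosing $C^* \geq M$ makes $G_v(u) := q(u;v) + C^*\eta(u|v)$ non-negative and comparable to $\eta(u|v)$.

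Next I would argue by contradiction on compact $\V$. If the lemma failed, there would exist a sequence $u_n \in \Pi_a^c$ with $\Psi(u_n) \leq 0$ and $-\Psi(u_n)/\Phi(u_n) \to \infty$. Extracting a subsequential limit $u_n \to u^*$, one obtains $\Psi(u^*) \leq 0$ and $\Phi(u^*) = 0$, so $u^* \in \partial \Pi_a$. The task reduces to showing $\Psi(u^*) \geq 0$ at every such boundary point; then blow-up of the ratio forces $\Psi(u_n) \to 0$ slower than $\Phi(u_n) \to 0$, which a quantitative version of the same analysis rules out.

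The main obstacle is precisely this boundary analysis. I would exploit Lemma \ref{piaasymptotics}, which confines $\Pi_a$ to a ball of radius $O(\sqrt{a})$ around the interior point $u_L$ (note $\Phi(u_L) = -\eta(u_L|u_R) < 0$), so any $u^* \in \partial \Pi_a$ satisfies $|u^* - u_L| = O(\sqrt{a})$. Taylor expanding $\eta(\cdot|u_L)$ and $q(\cdot;u_L)$ to quadratic order around $u_L$, using the compatibility $\eta' = q'f'$ to align the quadratic forms through $f'(u_L)$, and separately expanding $\eta(\cdot|u_R)$, $q(\cdot;u_R)$ around $u_R$, one can read off the leading-order relationship between $\Phi$ and $\Psi$ on $\partial \Pi_a$ and verify that $\Psi$ has the correct sign once $a^*$ is taken small enough. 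The delicate part is managing these expansions uniformly in $a$ and extracting the needed sign condition from the interplay of $f'(u_L)$ and the Hessian of $\eta$ at $u_L$.
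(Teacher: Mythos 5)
There is a genuine gap, and it sits exactly where the whole difficulty of the lemma lives. Your reduction is fine in the ``far-field'' region: if $\eta(u|u_L)\geq 2a\,\eta(u|u_R)$, then $\Phi(u)\geq \tfrac{1}{2a}\eta(u|u_L)$ while $|q(u;v)|\leq M\eta(u|v)$ gives $|\Psi(u)|\leq \tfrac{M}{a}\eta(u|u_L)+M\eta(u|u_R)\leq \tfrac{3M}{2a}\eta(u|u_L)$, so the ratio is bounded there with no further work. The problem is the thin shell $a\,\eta(u|u_R)<\eta(u|u_L)<2a\,\eta(u|u_R)$ hugging $\partial\Pi_a$: there $\Phi(u)$ tends to $0$ as $u$ approaches $\partial\Pi_a$ while $\Psi(u)=\tfrac{1}{a}q(u;u_L)-q(u;u_R)$ is merely $O(1)$, so the bound is a nontrivial sign-and-rate statement. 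Your compactness argument only yields that a limit point $u^*\in\partial\Pi_a$ satisfies $\Phi(u^*)=0$ and $\Psi(u^*)\leq 0$ (and, granting the boundary sign condition, $\Psi(u^*)=0$); this is perfectly consistent with the ratio $-\Psi/\Phi$ blowing up (e.g.\ $-\Psi\sim\sqrt{\Phi}$ would pass every soft test). The phrase ``a quantitative version of the same analysis rules out'' is therefore not a proof step but a restatement of the lemma, and the Taylor-expansion program you sketch for it never identifies which structural fact forces the sign to come out right.

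That missing fact is Proposition \ref{mainprop1}, which the paper explicitly flags as the crucial ingredient (it plays the role of Proposition 2.1 in the cited proof of \cite{MR4667839}, to which the paper defers). Indeed, $D_{cont}(u)\leq 0$ rewrites as $\tfrac{1}{a}q(u;u_L)-q(u;u_R)\geq \lambda_1(u)\bigl(\tfrac{1}{a}\eta(u|u_L)-\eta(u|u_R)\bigr)$, i.e.\ $\Psi\geq\lambda_1\Phi$ on $\Pi_a$; on $\partial\Pi_a$ this gives exactly the boundary sign condition $\Psi\geq 0$ you need, and a quantitative extension of this inequality to a $\Pi_a^c$-neighborhood of $\partial\Pi_a$ gives $-\Psi\leq|\lambda_1|\,\Phi+o(\Phi)\leq C^*\Phi$ there, which is how the shell is actually handled. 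Attempting instead to re-derive the sign from raw expansions of $\eta(\cdot|u_L)$, $q(\cdot;u_L)$, $\eta(\cdot|u_R)$, $q(\cdot;u_R)$ around $u_L$ and $u_R$ amounts to reproving Proposition \ref{mainprop1} without the machinery (Lemmas \ref{dissipationformula} and \ref{closetoul}) the paper uses for it, and it is not clear such a direct expansion closes: the leading behavior of $\Psi$ near $\partial\Pi_a$ involves $q(u_L;u_R)-\lambda$-type combinations whose sign is precisely what the shock-admissibility analysis controls. To repair the proof, invoke Proposition \ref{mainprop1} on $\Pi_a$ and extend the inequality $\Psi\geq\lambda_1\Phi-C\Phi$ across $\partial\Pi_a$ by continuity/compactness, then combine with your far-field estimate.
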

For the proof, we refer to \cite{MR4667839}. We note that in their proof of Lemma 4.1, they crucially use their Proposition 2.1, but our Proposition \ref{mainprop1} plays the same role and so the proof proceeds. \newline
\textbf{\underline{Step 3: Construction of the shift}}
\par Consider the set $\Pi_a^c$, which is an open set. Therefore, $-\mathds{1}_{\Pi_a^c}$ is upper semi-continuous. Consider the velocity
\begin{align}\label{velocity}
V(u):=\lambda_1(u)-(C^*+2L)\mathds{1}_{\Pi_a^c}(u), 
\end{align}
where $L:=\sup_{u \in \V}|\lambda_1(u)|$ and $C^*$ is from Lemma \ref{qcontrol}. We solve the following ODE with discontinuous right-hand side in the sense of Filipov
\begin{align}\label{shift}
\begin{cases}
\dot{h}(t)=V(u(h(t),t)), \\ 
h(0)=0.
\end{cases}
\end{align}
The existence of such a shift satisfying \eqref{shift} is given by the following lemma. For a proof, we refer to \cite{MR2807139}.
\begin{lemma}\label{shiftexistence}
Let $V:\V \to \R$ be bounded and upper semi-continuous on $\V$, and continuous on $U$ an open, full-measure subset of $\V$. Let $u \in \mathcal{S}_{weak}$. Then, there exists a Lipschitz function $h:[0,\infty) \to \R$ such that
\begin{align*}
&V_{min}(t) \leq \dot{h}(t) \leq V_{max}(t), \\
&h(0)=0, \\
&Lip[h] \leq ||V||_\infty, \\
\end{align*}
for almost every $t$, where $u_\pm:=u(h(t)\pm, t)$ and 
\begin{align*}
V_max(t)=\max(V(u_+), V(u_-), \indent \text{ and } \indent V_{min}(t)=\begin{cases}
    \min(V(u_+), V(u_-)),  & u_+, u_- \in U, \\
    -||V||_\infty & \text{otherwise}.
\end{cases}
\end{align*}
Furthermore, for almost every $t$
\begin{align*}
f(u_+)-f(u_-)&=\dot{h}(u_+-u_-), \\
q(u_+)-q(u_-) &\leq \dot{h}(\eta(u_+)-\eta(u_-)), 
\end{align*}
i.e. for almost every $t$ either $(u_-, u_+)$ is a shock verifying \eqref{entropic} or $u_-=u_+$. 
\end{lemma}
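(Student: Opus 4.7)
The plan is to construct $h$ as the limit of classical Carathéodory solutions to a regularized Cauchy problem, in the spirit of Filippov's theory of ODEs with discontinuous right-hand sides, and then pass to the limit using the strong trace property of $u$ to recover both the Filippov-type inclusion and the Rankine--Hugoniot/entropy jump conditions along the curve $x = h(t)$.

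First I would regularize. Let $\rho_n$ be a standard mollifier in the $x$-variable and set $u_n(t,x) := (\rho_n \ast_x u)(t,x)$, which is smooth in $x$, uniformly bounded in $L^\infty$, and valued (after a projection if necessary) in $\V$. Since $V$ is upper semi-continuous on $\V$ and continuous on the open full-measure set $U$, choose a sequence $V_n \in C(\V)$ with $V_n \searrow V$ pointwise on $\V$, $V_n = V$ on $U$ up to a uniform $1/n$ error, and $\|V_n\|_\infty \leq \|V\|_\infty + 1/n$. The Cauchy problem
\begin{equation*}
\dot h_n(t) = V_n(u_n(t, h_n(t))), \qquad h_n(0) = 0,
\end{equation*}
admits a classical Carathéodory solution that is Lipschitz with constant uniformly bounded by $\|V\|_\infty + 1$. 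By Arzelà--Ascoli, up to a subsequence $h_n \to h$ uniformly on compact sets, with $h$ Lipschitz, $h(0) = 0$, and $\mathrm{Lip}[h] \leq \|V\|_\infty$.

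The heart of the proof is the Filippov inclusion $V_{\min}(t) \leq \dot h(t) \leq V_{\max}(t)$ for a.e. $t$. At a Lebesgue point $t$ of $\dot h$, integrate $\dot h_n$ on $(t-\delta, t+\delta)$ and use the strong trace property (Definition \ref{strongtrace}) along the Lipschitz curve $s\mapsto h(s)$: for $n$ large and $\delta$ small, the values $u_n(s, h_n(s))$ accumulate in an arbitrarily small neighborhood of the two-element set $\{u_-(t), u_+(t)\}$. Upper semi-continuity of $V$ then gives $\dot h(t) \leq V_{\max}(t)$. For the lower bound: when $u_\pm(t)\in U$, continuity of $V$ on $U$ yields the sharp bound $\min(V(u_-(t)),V(u_+(t)))$, while otherwise the trivial bound $-\|V\|_\infty$ always holds. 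This produces exactly the piecewise definition of $V_{\min}(t)$ in the statement.

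Finally, the Rankine--Hugoniot and entropy identities come from $u \in \mathcal{S}_{weak}$ together with the strong trace property. Given the Lipschitz shift $h$, test the weak formulation of \eqref{hs} and \eqref{entropic} against smooth approximations of $\psi(t)\,\mathds{1}_{\{x < h(t)\}}$ with $\psi \in C_c^\infty(\R^+)$. After integration by parts the interior terms vanish, and what remains are boundary integrals along $x = h(t)$ which, by the strong trace property, converge to
\begin{equation*}
\int_0^\infty \psi(t)\bigl[\dot h(t)(u_+ - u_-) - (f(u_+) - f(u_-))\bigr]\diff t = 0
\end{equation*}
and the analogous inequality for $(\eta, q)$. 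Since $\psi$ is arbitrary, the Lebesgue differentiation theorem yields the a.e. pointwise identities, which precisely say that for almost every $t$ either $u_-(t) = u_+(t)$ or $(u_-(t), u_+(t))$ is a shock verifying \eqref{entropic}. The main obstacle is the passage to the limit in the Filippov inclusion: one must simultaneously control the possible discontinuity of $V$ across $\partial \Pi_a$ and the possible jumps of $u$ in $x$, and it is exactly the combination of upper semi-continuity of $V$, continuity of $V$ on $U$, and the strong trace of $u$ that prevents pathological concentration of $u_n(\cdot, h_n(\cdot))$ on $\V \setminus U$ and secures the limiting inequalities.
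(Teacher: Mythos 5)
The paper itself does not prove this lemma: it quotes it and refers to \cite{MR2807139} for the proof. Your overall architecture --- regularize the velocity, solve a Carath\'eodory ODE, extract a uniform Lipschitz limit by Arzel\`a--Ascoli, identify $\dot h$ using the strong traces, and recover the Rankine--Hugoniot and entropy relations by testing the weak formulations of \eqref{hs} and \eqref{entropic} against approximations of $\psi(t)\mathds{1}_{\{x<h(t)\}}$ --- is exactly the Filippov construction used there, and your treatment of the jump relations at the end is correct.

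There is, however, a genuine gap in the central step, and it comes from your choice to mollify $u$ in $x$. If $u(t,\cdot)$ has a jump at $h(t)$ with traces $u_-\neq u_+$, then for $h_n(s)$ within distance $O(1/n)$ of $h(s)$ the mollified value $u_n(s,h_n(s))=\int\rho_n(y)\,u(s,h_n(s)-y)\diff y$ is close to a convex combination $\theta u_-(s)+(1-\theta)u_+(s)$ with $\theta\in[0,1]$ essentially arbitrary, \emph{not} to the two-point set $\{u_-(s),u_+(s)\}$. So your claim that ``the values $u_n(s,h_n(s))$ accumulate in an arbitrarily small neighborhood of $\{u_-(t),u_+(t)\}$'' fails precisely in the case that matters (a shock sitting on the curve), and upper semicontinuity of $V$ at $u_\pm$ gives no control on $V$ at such intermediate states. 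In the application $V=\lambda_1-(C^*+2L)\mathds{1}_{\Pi_a^c}$ with $\lambda_1$ nonlinear, so $V(\theta u_-+(1-\theta)u_+)$ can exceed $\max(V(u_-),V(u_+))$ and the inclusion $\dot h(t)\leq V_{max}(t)$ would not follow. The fix --- and what the cited proof does --- is to regularize the composition $x\mapsto V(u(t,x))$ rather than $u$ itself, e.g.\ $\dot h_n(t)=\tfrac{n}{2}\int_{-1/n}^{1/n}V_n(u(t,h_n(t)+y))\diff y$: the right-hand side is continuous in $h_n$ (a difference of antiderivatives of a bounded measurable function, so Carath\'eodory theory still applies) and, crucially, $\dot h_n(t)$ is then a genuine average, hence lies in the convex hull of $\{V_n(u(t,z)):|z-h_n(t)|\leq 1/n\}$; the strong trace property together with upper semicontinuity (resp.\ continuity on $U$) then gives $\dot h\leq V_{max}$ (resp.\ $\dot h\geq V_{min}$) in the limit. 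A secondary, patchable point: continuous envelopes $V_n\searrow V$ of an upper semicontinuous function converge uniformly only on compact subsets of the continuity set $U$ (Dini), not uniformly on all of $U$ as you assert; the limit passage should instead use pointwise a.e.\ convergence and dominated convergence.
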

\begin{remark}\label{laxremark}
Note that by the definition of $\mathcal{S}_{weak}$, if $u_- \neq u_+$ then we further have that the shock $(u_-,u_+)$ verifies the Lax condition \eqref{lax}.
\end{remark}
\textbf{\underline{Step 4: Proof of the contraction}}
\par Finally, we show the contraction of the dissipation functional. We split into the following four cases (which allow for $u_+=u_-$)
\begin{align*}
\text{Case 1. } & u_-, u_+ \in \Pi_a^c, \\ 
\text{Case 2. } & u_- \in \Pi_a^c, u_+ \in \Pi_a, \\
\text{Case 3. } & u_- \in \Pi_a, u_+ \in \Pi_a^c, \\
\text{Case 4. } & u_-,u_+ \in \Pi_a.
\end{align*}
As we only need to prove the contraction for almost every $t$, by Lemma \ref{shiftexistence}, we may consider only times for which either $(u_-, u_+, \dot{h}(t))$ is a shock verifying \eqref{entropic}\eqref{lax} (see Remark \ref{laxremark}) or $u_+=u_-$. 
\newline \underline{Case 1:} In this case, by Lemma \ref{shiftexistence} and \eqref{velocity}, we see $\dot{h}(t) < \inf_{u \in \V}\lambda_1(u)$. If $u_- \neq u_+$, then this is a contradiction to \eqref{lax}. So necessarily $u_-=u_+$. Denote $u=u_-=u_+$. Then, as $u \not \in \Pi_a$, $\eta(u|u_L)-a\eta(u|u_R) > 0$. So, using $\dot{h}(t) \leq -C^*$:
\begin{align*}
\dot{h(t)}(\eta(u|u_L)-a\eta(u|u_R))-q(u;u_L)+aq(u;u_R)&=a(-\frac{1}{a}q(u;u_L)+q(u;u_R) \\
&+\dot{h}(t)(\frac{1}{a}\eta(u|u_L)-\eta(u|u_R))) \\
&\leq a\bigg(-\frac{1}{a}q(u;u_L)+q(u;u_R) \\
&-C^*(\frac{1}{a}\eta(u|u_L)-\eta(u|u_R))\bigg).
\end{align*}
If $q(u;u_L)-aq(u;u_R) \geq 0$, then the right-hand side is non-positive. If $q(u;u_L)-aq(u;u_R) \leq 0$, Lemma \ref{qcontrol} grants that the right-hand side is non-positive. 
\newline \underline{Case 2:} In this case, we must have $u_- \neq u_+$. Further:
\[
\dot{h}(t) \in [\lambda_1(u_-)-(C^*+2L), \lambda_1(u_+)] \implies \dot{h}(t) \leq \lambda_1(u_+).
\]
This contradicts \eqref{lax}, so we conclude that this case may not happen.
\newline \underline{Case 3:} In this case necessarily $u_- \neq u_+$. Consequently, we have for the velocity
\[
\dot{h}(t) \in [\lambda_1(u_+)-(C^*+2L), \lambda_1(u_-)] \implies \dot{h}(t) \leq \lambda_1(u_-).
\]
Thus, assumption (B1) (g) grants that $u_+=S^1_{u_-}(s)$ for some $s$. Proposition \ref{mainprop2} gives the result immediately. 
\newline \underline{Case 4:} If $u_- \neq u_+$, then following the logic in case 3 and using Proposition \ref{mainprop2} gives the result. If $u_-=u_+$, we use Proposition \ref{mainprop1} instead. This completes the proof of Theorem \ref{main2}.

\section{Proof of Propositions \ref{mainprop1} \& \ref{mainprop2}}\label{proppf}
In this section, we will prove Propositions \ref{mainprop1} and \ref{mainprop2}. The macroscopic structure of the proof is the same for both sets of assumptions (A1) and (B1), but there are some slight differences in some of the details and proofs. We will go through the proofs simultaneously, highlighting the differences when they arise. The key differences are mostly in the proof of Lemma \ref{dissipationformula2}. The case (B1) is slightly more delicate than the case (A1) as in the former case, shocks verifying \eqref{lax} may be obtained by moving both forward and backwards along the shock curve, while in the latter case they may only be obtained in the positive part of the shock curve. For convenience, we will use the notation $\sigma_u(s):=\sigma^1_u(s)$. We start with a lemma that gives an explicit formula for entropy lost from traveling along the shock curve.
\begin{lemma}[Lemma 3 in \cite{MR3537479}]\label{dissipationformula}
Let $(u_-, u_+)$ be a shock with speed $\sigma_\pm$ verifying \eqref{entropic}. Then, for any $v \in \V$
\begin{align*}
q(u_+;v)-\sigma_\pm \eta(u_+|v) \leq q(u_-;v)-\sigma \eta(u_-|v).
\end{align*}
Further, if $u_- \in B$, as in Assumption (A1) or (B1), and there exists $s$ such that $u_+=S^1_{u_-}(s)$, then
\begin{align*}
q(u_+;v)-\sigma_\pm \eta(u_+|v)-q(u_-;v)+\sigma_\pm \eta(u_-|v)=\int_0^s\sigma'_{u_-}(t)\eta(u_-|S^1_{u_-}(t))\diff t.
\end{align*}
\end{lemma}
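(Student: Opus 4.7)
The strategy is to first reduce the $v$-dependent dissipation to a $v$-independent expression by direct algebraic manipulation using the Rankine-Hugoniot condition, and then to obtain the integral formula by differentiating along the shock curve.

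For the first assertion, I would expand the definitions of $\eta(\cdot|\cdot)$ and $q(\cdot;\cdot)$ in the difference
\[
\bigl[q(u_+;v) - \sigma_\pm \eta(u_+|v)\bigr] - \bigl[q(u_-;v) - \sigma_\pm \eta(u_-|v)\bigr]
\]
and invoke the Rankine-Hugoniot condition $f(u_+) - f(u_-) = \sigma_\pm (u_+ - u_-)$. Combined with the entropy-entropy flux compatibility \eqref{entropy}, the linear terms involving the reference state $v$ cancel, leaving $q(u_+) - q(u_-) - \sigma_\pm[\eta(u_+) - \eta(u_-)]$. This quantity is nonpositive by the integrated form of the entropy inequality \eqref{entropic} across the shock, yielding the desired inequality.

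For the integral formula, I would introduce the scalar curve
\[
\phi(t) := q(S^1_{u_-}(t)) - q(u_-) - \sigma_{u_-}(t) \bigl[\eta(S^1_{u_-}(t)) - \eta(u_-)\bigr], \qquad t \in [0, s].
\]
By the $v$-independence established in the first step, $\phi(s)$ coincides with the left-hand side of the claimed identity, and $\phi(0) = 0$; so it suffices to show $\phi'(t) = \sigma'_{u_-}(t)\,\eta(u_-|S^1_{u_-}(t))$ and integrate from $0$ to $s$. The computation of $\phi'(t)$ combines two ingredients: the entropy-entropy flux relation \eqref{entropy}, and the $t$-derivative of the Rankine-Hugoniot identity along the shock curve, namely
\[
\bigl[f'(S^1_{u_-}(t)) - \sigma_{u_-}(t) I\bigr] \tfrac{d}{dt}S^1_{u_-}(t) = \sigma'_{u_-}(t) \bigl(S^1_{u_-}(t) - u_-\bigr).
\]
Substituting this relation eliminates the $\tfrac{d}{dt}S^1_{u_-}$ dependence and leaves $\sigma'_{u_-}(t)$ multiplied by $\eta'(S^1_{u_-}(t))(S^1_{u_-}(t) - u_-) - [\eta(S^1_{u_-}(t)) - \eta(u_-)]$, which is exactly $\eta(u_-|S^1_{u_-}(t))$ by definition of the relative entropy.

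The main difficulty is simply the bookkeeping in the derivative of $\phi(t)$: the entropy compatibility and the differentiated Rankine-Hugoniot relation must be combined in the right order for the $\tfrac{d}{dt}S^1_{u_-}$ dependence to collapse. Once this is carried out, the lemma reduces to purely algebraic identities, with no analytic subtleties beyond the $C^1$ regularity of the shock curve guaranteed by Assumptions (A1) or (B1).
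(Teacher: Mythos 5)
Your argument is correct and is essentially the standard proof of this lemma as given in the cited reference \cite{MR3537479} (the paper itself defers the proof to that source): the $v$-dependence cancels via Rankine--Hugoniot, and differentiating the reduced expression along the shock curve using the differentiated Rankine--Hugoniot identity and $q'=\eta'f'$ yields exactly $\sigma'_{u_-}(t)\,\eta(u_-|S^1_{u_-}(t))$. The only cosmetic remark is that the cancellation of the reference-state terms in the first step uses only the Rankine--Hugoniot condition (with the standard definition $q(u;v)=q(u)-q(v)-\eta'(v)(f(u)-f(v))$), not the entropy--entropy flux compatibility, which enters only in the computation of $\phi'(t)$.
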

For the proof, we refer to \cite{MR3537479}. The next lemma shows that traversing a certain portion of the shock curve in the convex-concave case results in net negative entropy dissipation.
\begin{lemma}\label{netneg}
Assume that $\lambda_1$ is convex-concave, that is it satisfies assumptions (B1). Then, for any $u_- \in B$, we have
\begin{align*}
\int_0^{s_{u_-}^{-\natural}}\sigma'_{u_-}(t)\eta(u_-|S^1_{u_-}(t))\diff t \leq 0.
\end{align*}
\end{lemma}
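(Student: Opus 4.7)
The plan is to exploit both the inequality and the identity halves of Lemma \ref{dissipationformula} applied to the single shock $(u_-, S^1_{u_-}(s_{u_-}^{-\natural}))$ sitting at the endpoint of the ``left'' Lax-admissible branch. By Assumption (B1)(c), $s_{u_-}^{-\natural}$ is precisely the (negative) boundary parameter at which the Lax entropy condition \eqref{lax} is just attained for shocks emanating from $u_-$, so $(u_-, S^1_{u_-}(s_{u_-}^{-\natural}))$ is a Lax shock. By the remark following \eqref{lax} (equivalence of Lax and Liu admissibility for our class of fluxes), it is then Liu-admissible, hence satisfies the entropy inequality \eqref{entropic}, which is the hypothesis required to apply Lemma \ref{dissipationformula}.

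With entropy admissibility of the endpoint shock in hand, I would apply Lemma \ref{dissipationformula} with $v = u_-$, $u_+ = S^1_{u_-}(s_{u_-}^{-\natural})$, and $s = s_{u_-}^{-\natural}$. Since $q(u_-;u_-) = 0$ and $\eta(u_-|u_-) = 0$ directly from the definitions \eqref{relatives}, the inequality half of the lemma collapses to
\[
q(u_+;u_-) - \sigma_\pm \eta(u_+|u_-) \leq 0,
\]
while the identity half gives
\[
q(u_+;u_-) - \sigma_\pm \eta(u_+|u_-) = \int_0^{s_{u_-}^{-\natural}} \sigma'_{u_-}(t)\, \eta(u_-|S^1_{u_-}(t)) \, \diff t.
\]
Chaining these two relations yields the claim.

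The only real content is the verification that the endpoint shock is entropy-admissible, which is automatic from the definition of $s_{u_-}^{-\natural}$ in (B1)(c) plus the Lax/Liu/entropy implications recalled above; there is no genuine obstacle. It is worth noting that no pointwise sign information about the integrand is used: by (B1)(e)(ii), $\sigma'_{u_-}$ changes sign at the intermediate parameter $t = s_{u_-}^{\natural} \in (s_{u_-}^{-\natural}, 0)$, so the integrand has opposite signs on the two sub-intervals $(s_{u_-}^{-\natural}, s_{u_-}^{\natural})$ and $(s_{u_-}^{\natural}, 0)$. The cancellation between them is enforced globally by the Lax admissibility at the endpoint, and Lemma \ref{dissipationformula} conveniently packages that cancellation into the relative entropy inequality for free.
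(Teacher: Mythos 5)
Your proof is correct and follows essentially the same route as the paper: identify the endpoint shock $(u_-, S^1_{u_-}(s_{u_-}^{-\natural}))$ as Lax-admissible via (B1)(c), upgrade to entropy admissibility through the Lax/Liu equivalence, and then combine the identity and inequality halves of Lemma \ref{dissipationformula}. The only cosmetic difference is that you specialize $v=u_-$ to make the right-hand side collapse to a single term, whereas the paper keeps $v$ arbitrary; both are immediate.
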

\begin{proof}
Let $u_+=(u_-, S^1_{u_-}(s_{u_-}^{-\natural}))$. By assumption (B1)(c), the shock $(u_-, u_+)$ verifies the Lax condition. For convex-concave characteristic fields, the Lax and Liu conditions are equivalent (cf. Lemma 2.4 in \cite{MR1759195}), so we have that $(u_-, u_+)$ verifies \eqref{entropic}. Consequently, applying Lemma \ref{dissipationformula}, we see
\begin{align*}
\int_0^{s_{u_-}^{-\natural}}\sigma'_{u_-}(t)\eta(u_-|S^1_{u_-}(t))\diff t=q(u_+;v)-\sigma_\pm \eta(u_+|v)-q(u_-;v)+\sigma_{\pm} \eta(u_-|v) \leq 0.
\end{align*}
\end{proof}
The next lemma gives a formula and crucial estimates for net entropy dissipation between two points along the shock curve. Recall that $s_0$ is the fixed parameter designating the shock size $u_R=S^1_{u_L}(s_0)$.
\begin{lemma}\label{dissipationformula2}
There exists $a^*, \eps, \delta, \kappa > 0$ such that the following is true. For assumptions (A1), assume that $s_0 < s_{u_L}$ and for assumptions (B1), assume that $s_0 < \eps$. For any $u \in \Pi_a$, and $s$ such that $S^1_{u}(s)$ obeys the Lax condition \eqref{lax}, we have
\begin{align*}
q(S^1_{u}(s);S^1_u(s_0))-\sigma_u(s)\eta(S^1_u(s)|S^1_u(s_0))=\int_{s_0}^s\sigma_u'(t)(\eta(u|S_u(t))-\eta(u|S_u(s_0)))\diff t.
\end{align*}
Further, we have estimates
\begin{align*}
q(S^1_{u}(s);S^1_u(s_0))-\sigma_u(s)\eta(S^1_u(s)|S^1_u(s_0)) \leq -\kappa |\sigma_u(s)-\sigma_u(s_0)|^2, \indent |s-s_0| \leq \delta, \\
q(S^1_{u}(s);S^1_u(s_0))-\sigma_u(s)\eta(S^1_u(s)|S^1_u(s_0)) \leq -\kappa |\sigma_u(s)-\sigma_u(s_0)|, \indent |s-s_0| \geq \delta.
\end{align*}
\end{lemma}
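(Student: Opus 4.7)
The claimed formula follows from two applications of Lemma~\ref{dissipationformula}. Apply it with $u_- = u$ and $u_+ = S^1_u(s_0)$, taking $v = S^1_u(s_0)$ so that $q(u_+;v) = \eta(u_+|v) = 0$; this identifies the combination $-q(u;v) + \sigma_u(s_0)\eta(u|v)$ with $\int_0^{s_0}\sigma_u'(t)\eta(u|S^1_u(t))\,dt$. Apply it again with $u_+ = S^1_u(s)$ and the same $v$, which is permissible since by hypothesis $S^1_u(s)$ verifies the Lax condition, and Lax is equivalent to Liu (and hence implies the entropy inequality) for these fluxes by the remark following \eqref{lax}. Subtracting the two identities eliminates $q(u;v)$; the residual term $(\sigma_u(s) - \sigma_u(s_0))\eta(u|S^1_u(s_0))$ can be rewritten as $\int_{s_0}^s\sigma_u'(t)\eta(u|S^1_u(s_0))\,dt$, and combining with the difference of the two integrals produces the stated identity.

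Writing $F(s) := q(S^1_u(s); S^1_u(s_0)) - \sigma_u(s)\eta(S^1_u(s)|S^1_u(s_0))$, differentiation of the integral representation gives $F(s_0) = 0$, $F'(s) = \sigma_u'(s)(\eta(u|S^1_u(s)) - \eta(u|S^1_u(s_0)))$, $F'(s_0) = 0$, and $F''(s_0) = \sigma_u'(s_0)\,\partial_s\eta(u|S^1_u(s_0))$. Assumptions (A1)(e,g) or (B1)(e,g) make this strictly negative (in the (B1) case one uses $s_0 > 0 > s_u^\natural$, so $\sigma_u'(s_0) < 0$). By Lemma~\ref{piaasymptotics}, choosing $a^*$ small forces $\Pi_a$ to lie in an arbitrarily small neighborhood of $u_L$, and the $C^1$ regularity of $\sigma_u$, $S^1_u$, and the related parameters in $(s,u)$ upgrades the pointwise negativity to a uniform bound $F''(s_0) \leq -c_0 < 0$ valid for all $u \in \Pi_a$. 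A Taylor expansion at $s_0$, together with the uniform Lipschitz bound $|\sigma_u(s) - \sigma_u(s_0)| \leq M|s-s_0|$ from the same regularity, then yields the local quadratic estimate on $|s - s_0| \leq \delta$ for $\delta$ small.

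For the linear bound on $|s - s_0| \geq \delta$, the sign structure of $F'$ obtained above shows that $s_0$ is a strict local maximum of $F$ with $F(s_0) = 0$. In case (A1), the monotonicities of $\sigma_u$ and $\eta(u|S^1_u(\cdot))$ persist throughout the Lax-admissible domain $[0, s_u]$, so $F$ is strictly decreasing on $[s_0, s_u]$ and strictly increasing on $[0, s_0]$. Hence $F(s) \leq \max(F(s_0-\delta), F(s_0+\delta))$, which by the quadratic estimate at $s_0 \pm \delta$ is bounded by a fixed strictly negative constant $-c_1$ uniformly in $u \in \Pi_a$; since $|\sigma_u(s) - \sigma_u(s_0)|$ is uniformly bounded on the compact Lax-admissible region, choosing $\kappa$ below $c_1$ divided by this bound yields the estimate in case (A1).

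The main obstacle is case (B1) on the tail $s \in (-\infty, s_u^{-\natural}]$, since $\sigma_u'$ changes sign at $s_u^\natural$ and the sign of $\eta(u|S^1_u(t)) - \eta(u|S^1_u(s_0))$ changes at some $t^\star < 0$ where the entropy levels coincide, so $F$ need no longer be monotone there. The plan is first to control $F(s_u^{-\natural})$ by rewriting the integral on $[s_0, s_u^{-\natural}]$ as the Lemma~\ref{netneg} integral $\int_0^{s_u^{-\natural}}\sigma_u'(t)\eta(u|S^1_u(t))\,dt$ (which is non-positive) minus $\int_0^{s_0}\sigma_u'(t)\eta(u|S^1_u(t))\,dt$ (handled by the (A1)-style monotonicity argument above) and an affine correction $(\sigma_u(s_u^{-\natural})-\sigma_u(s_0))\eta(u|S^1_u(s_0))$; then for $s < s_u^{-\natural}$ one analyses $F(s) - F(s_u^{-\natural})$ using that $\sigma_u$ is strictly monotone on $(-\infty, s_u^\natural)$ and that on this range the integrand's sign eventually agrees with that of the leading contribution. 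The moderate-strength condition $s_0 < \eps$ is used precisely here: for $\eps$ small, $\sigma_u(s_0)$ is arbitrarily close to $\sigma_u(0) = \lambda_1(u)$, which forces the affine correction and the residual at $s_u^{-\natural}$ to remain controlled, so that the positive contribution from $t < s_u^\natural$ cannot overwhelm the negative one from $t \in (s_u^\natural, s_0)$. The explicit value of $\eps$ emerges from this balancing and, together with the constants already fixed above, determines the admissible $\delta$ and $\kappa$.
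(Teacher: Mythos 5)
Your derivation of the integral identity (two applications of Lemma \ref{dissipationformula} with $v=S^1_u(s_0)$), your local quadratic estimate near $s_0$, and your monotonicity argument for case (A1) and for the $s\geq 0$ part of case (B1) all match the paper's route (the paper outsources these steps to Lemma 4 of \cite{MR3537479} after checking $\sigma_u'(s_0)<0$ uniformly on $\Pi_a$ via Lemma \ref{piaasymptotics}). The decomposition you propose for the tail $s\leq s_u^{-\natural}$ --- the Lemma \ref{netneg} integral over $[0,s_u^{-\natural}]$, the $[0,s_0]$ piece handled by the already-established estimate at $s=0$, and the affine correction coming from $(\sigma_u(s)-\sigma_u(s_0))\eta(u|S^1_u(s_0))$ --- is also exactly the paper's decomposition.

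The genuine gap is in how you close the tail case, which is the heart of the lemma. You leave it as a ``plan'' and, more importantly, you attribute the role of the moderate-strength condition to the wrong mechanism: you argue that for $\eps$ small, $\sigma_u(s_0)$ is close to $\lambda_1(u)$ so the affine correction is controlled. But shrinking $s_0$ shrinks the good term $-\kappa_1|\lambda_1(u)-\sigma_u(s_0)|$ at the same rate as the bad ones, so a pure smallness argument does not obviously close; moreover the paper stresses that $\eps$ need not be small. The actual mechanism is an entropy-level comparison: $\eps$ is \emph{defined} by $\eta(u_L|S^1_{u_L}(\eps))=\eta(u_L|S^1_{u_L}(s_{u_L}^{-\natural}))$, so that $s_0<\eps$ together with (B1)(g) and Lemma \ref{piaasymptotics} gives a uniform margin $\eta(u|S^1_u(t))-\eta(u|S^1_u(s_0))\geq \nu>0$ for all $t\leq s_u^{-\natural}$ and $u\in\Pi_a$. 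Since $\sigma_u'>0$ on $(-\infty,s_u^\natural)\supset[s,s_u^{-\natural}]$ and $\sigma_u(s_u^{-\natural})=\lambda_1(u)$, the tail integral $\int_{s_u^{-\natural}}^s\sigma_u'(t)\eta(u|S^1_u(t))\diff t$ is then bounded by $-(\lambda_1(u)-\sigma_u(s))\bigl(\eta(u|S^1_u(s_0))+\nu\bigr)$, which cancels the positive affine correction $(\lambda_1(u)-\sigma_u(s))\eta(u|S^1_u(s_0))$ exactly and leaves the quantitative surplus $-\nu|\lambda_1(u)-\sigma_u(s)|$ needed for the linear estimate. Your phrase that ``the integrand's sign eventually agrees with that of the leading contribution'' concedes precisely the sign issue at $t=s_u^{-\natural}$ that the condition $s_0<\eps$ is designed to resolve uniformly, not just eventually; without identifying that definition of $\eps$, the argument does not close.
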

\begin{proof}
We handle the assumptions (A1) case first. Firstly, as $s_0 < s_{u_L}$, we have $\sigma'_{u_L}(s_0) < 0$ by assumption (A1) (e). So, taking $a^*$ sufficiently small, by Lemma \ref{piaasymptotics} we may have $\sigma'_u(s_0) < 0$ for all $u \in \Pi_a$. So, we obtain the result in the same way as the proof of Lemma 4 in \cite{MR3537479}.
\par Now we handle the assumptions (B1) case. Firstly, we may take $\delta < \frac{s_0}{2}$ so that if $|s-s_0| \leq \delta$, then $s > 0$. So, just as above, following the proof of Lemma 4 in \cite{MR3537479}, there exists $\kappa_1$ such that
\begin{align}\label{intermediate}
&q(S^1_{u}(s);S^1_u(s_0))-\sigma_u(s)\eta(S^1_u(s)|S^1_u(s_0)) \leq -\kappa_1 |\sigma_u(s)-\sigma_u(s_0)|^2, |s-s_0| \leq \delta, \\ \notag 
&q(S^1_{u}(s);S^1_u(s_0))-\sigma_u(s)\eta(S^1_u(s)|S^1_u(s_0)) \leq -\kappa_1 |\sigma_u(s)-\sigma_u(s_0)|,|s-s_0| \geq \delta, s \geq 0.
\end{align}
It remains to remove the condition $s \geq 0$ from the second estimate in \eqref{intermediate}. By assumption (B1) (d), we need only handle the case $s \leq s_u^{-\natural}$. We compute using the dissipation formula
\begin{align*}
q(S^1_{u}(s);S^1_u(s_0))-\sigma_u(s)\eta(S^1_u(s)|S^1_u(s_0))&=\int_{s_0}^s\sigma_u'(t)(\eta(u|S_u(t))-\eta(u|S_u(s_0)))\diff t \\
&= \int_0^{s_0} \sigma_u'(t)(\eta(u|S^1_u(s_0))-\eta(u|S^1_u(t)))\diff t \\
&+\int_s^0 \sigma_u'(t)\eta(u|S^1_u(s_0))\diff t \\
&+\int_0^{s_{u}^{-\natural}}\sigma'_{u}(t)\eta(u_-|S^1_{u}(t))\diff t \\
&+\int_{s_{u}^{-\natural}}^s \sigma_u'(t)\eta(u|S^1_u(t))\diff t.
\end{align*}
Using \eqref{intermediate} and Lemma \ref{netneg}, we have
\begin{align}\label{stl}
q(S^1_{u}(s);S^1_u(s_0))-\sigma_u(s)\eta(S^1_u(s)|S^1_u(s_0)) &\leq -\kappa_1|\lambda_1(u)-\sigma_u(s_0)| \\ \notag 
&+(\lambda_1(u)-\sigma_u(s))\eta(u|S^1_u(s_0)) \\
&+\int_{s_{u}^{-\natural}}^s \sigma_u'(t)\eta(u|S^1_u(t))\diff t. \notag 
\end{align}
Now, choose $\eps > 0$ such that $\eta(u_L|S^1_{u_L}(\eps))=\eta(u_L|S^1_{u_L}(s_{u_L}^{-\natural}))$ (note that this is uniquely defined by assumption (B1) (g)). Then, as $s_0 < \eps$, we have $\eta(u_L|S^1_{u_L}(s_0))<\eta(u_L|S^1_{u_L}(s_{u_L}^{-\natural}))$. By assumption (B1) (e), both sides of the inequality are continuous and we may take $a^*$ sufficiently small (cf. Lemma \ref{piaasymptotics}) so that for some $\nu > 0$ 
\begin{align}\label{destimate}
\eta(u|S^1_{u}(s_{u}^{-\natural}))-\eta(u|S^1_{u}(s_0)) > \nu, \indent u \in \Pi_a.
\end{align}
Plugging \eqref{destimate} into \eqref{stl}, we obtain
\begin{align*}
q(S^1_{u}(s);S^1_u(s_0))-\sigma_u(s)\eta(S^1_u(s)|S^1_u(s_0)) &\leq -\kappa_1|\lambda_1(u)-\sigma_u(s_0)| \\ 
&+(\lambda_1(u)-\sigma_u(s))\eta(u|S^1_u(s_0)) \\
&+(\sigma_u(s)-\sigma_u(S^1_u(s_u^{-\natural}))(\eta(u|S^1_u(s_0))+\nu).
\end{align*}
As $\sigma_u(S^1_u(s_u^{-\natural}))=\lambda_1(u)$ (see Lemma 2.3 in \cite{MR1759195}), we see that the third term on the right-hand side will absorb the second and we have
\begin{align*}
q(S^1_{u}(s);S^1_u(s_0))-\sigma_u(s)\eta(S^1_u(s)|S^1_u(s_0)) &\leq -\kappa_1|\lambda_1(u)-\sigma_u(s_0)|-\nu |\lambda_1(u)-\sigma_u(s)| \\
&\leq -\kappa |\sigma_u(s_0)-\sigma_u(s)|,
\end{align*}
where $\kappa=\min(\kappa_1, \nu)$. 
\end{proof}
The next lemma give asymptotics on some of the terms in $D_{RH}$ for states sufficiently close to $u_L$. Let $\sigma_{LR}:=\sigma_{u_L}(s_0)$.
\begin{lemma}[Lemma 4.3 in \cite{MR3519973}]\label{closetoul}
There exist $a^*, \beta >0$ and $\sigma_0 \in (\sigma_{LR}, \lambda_1(u_L))$ such that for any $0 < a < a^*$, and any $u \in \Pi_a$
\begin{align*}
    \sigma_0 &\leq \lambda_1(u), \\
    -q(u;u_L)+\sigma_0 \eta(u|u_L) &\leq -\beta \eta(u|u_L), \\
    q(u;u_R)-\sigma_0 \eta(u|u_R) &\leq -\beta \eta(u|u_R).
\end{align*}
\end{lemma}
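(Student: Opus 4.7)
The plan is to combine the continuity of $\lambda_1$ with the concentration $\mathrm{diam}(\Pi_a)\le C\sqrt{a}$ from Lemma \ref{piaasymptotics} and a Taylor expansion of the relative quantities around $u_L$. By assumptions (A1)(e) or (B1)(e), $s_0>0$ forces the \textit{strict} Lax inequality $\sigma_{LR}<\lambda_1(u_L)$, so the target interval $(\sigma_{LR},\lambda_1(u_L))$ for $\sigma_0$ is nonempty; the task is to sharpen its lower endpoint so $\sigma_0$ can be chosen to make all three estimates close with a common $\beta>0$.

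\textbf{First and second estimates.} The inequality $\sigma_0\le\lambda_1(u)$ is immediate: $\lambda_1$ is continuous at $u_L$ and $\Pi_a$ lies in an $O(\sqrt{a})$-neighborhood of $u_L$. For the second, I Taylor expand in $w:=u-u_L$. The entropy-flux compatibility kills the linear part of $q(u;u_L)$, leaving $q(u;u_L)=\tfrac12 w^T\eta''(u_L)f'(u_L)w+O(|w|^3)$, while $\eta(u|u_L)=\tfrac12 w^T\eta''(u_L)w+O(|w|^3)$. The classical fact that convex entropies symmetrize the flux gives that $\eta''(u_L)f'(u_L)$ is symmetric with generalized eigenvalues (relative to $\eta''(u_L)$) equal to $\lambda_1(u_L),\dots,\lambda_n(u_L)$, so the Rayleigh quotient $q(u;u_L)/\eta(u|u_L)$ exceeds $\lambda_1(u_L)-o(1)$. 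Picking any $\beta<\lambda_1(u_L)-\sigma_0$ and $a^*$ small enough to absorb the cubic remainder delivers the bound.

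\textbf{Third estimate -- reduction.} On $\Pi_a$, $\eta(u|u_R)$ is bounded below by a positive constant (close to $\eta(u_L|u_R)$), so by continuity in $u$ it suffices to establish the strict inequality $q(u_L;u_R)-\sigma_0\eta(u_L|u_R)<0$ at $u=u_L$ and then shrink $\beta$ to absorb the $u\to u_L$ perturbation. A short computation using the Rankine--Hugoniot relation and the definition of the relative flux gives $q(u_L;u_R)-\sigma_0\eta(u_L|u_R)=(\sigma_{LR}-\sigma_0)\eta(u_L|u_R)-D_{\text{sh}}$, where $D_{\text{sh}}:=q(u_R)-q(u_L)-\sigma_{LR}(\eta(u_R)-\eta(u_L))\le0$ by \eqref{entropic}. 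The right-hand side is strictly negative exactly when $\sigma_0>\sigma_{**}:=\sigma_{LR}+(-D_{\text{sh}})/\eta(u_L|u_R)$, so I must show $\sigma_{**}<\lambda_1(u_L)$.

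\textbf{The key geometric input (and main obstacle).} Applying Lemma \ref{dissipationformula} to the shock $(u_L,u_R)=(u_L,S^1_{u_L}(s_0))$ with $v=u_L$ yields $-D_{\text{sh}}=\int_0^{s_0}(-\sigma_{u_L}'(t))\,\eta(u_L|S^1_{u_L}(t))\,dt$. By (A1)(e) or (B1)(e), $\sigma_{u_L}'(t)<0$ on $[0,s_0]$, and by (A1)(g) or (B1)(g), $t\mapsto\eta(u_L|S^1_{u_L}(t))$ is strictly increasing on $(0,s_0]$, so replacing this factor by its maximum value $\eta(u_L|u_R)$ gives the strict bound $-D_{\text{sh}}/\eta(u_L|u_R)<\int_0^{s_0}(-\sigma_{u_L}'(t))\,dt=\lambda_1(u_L)-\sigma_{LR}$, i.e. $\sigma_{**}<\lambda_1(u_L)$. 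I can therefore pick $\sigma_0\in(\sigma_{**},\lambda_1(u_L))$ and $\beta$ smaller than $\min(\lambda_1(u_L)-\sigma_0,\sigma_0-\sigma_{**})$, which lets the first, second, and third estimates close simultaneously. The main obstacle is exactly this strict gap $\sigma_{**}<\lambda_1(u_L)$; it is the entropy-monotonicity assumption (g) combined with the sign of $\sigma_{u_L}'$ from (e) that provides the strict inequality, and without both the approach would fail.
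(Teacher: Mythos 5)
Your argument is correct and is essentially the standard one from Kang--Vasseur \cite{MR3519973}, to which the paper simply defers without reproducing a proof: the second inequality via the quadratic expansion of $q(\cdot;u_L)$ and $\eta(\cdot|u_L)$ and the Rayleigh-quotient bound $\lambda_1(u_L)$ for $\eta''f'$ relative to $\eta''$, and the third via the identity $q(u_L;u_R)-\sigma_{LR}\eta(u_L|u_R)=-D_{\mathrm{sh}}$ combined with the dissipation formula of Lemma \ref{dissipationformula}. Your explicit verification that the strict gap $\sigma_{**}<\lambda_1(u_L)$ follows from the sign of $\sigma'_{u_L}$ in (A1)/(B1)(e) together with the monotonicity of $t\mapsto\eta(u_L|S^1_{u_L}(t))$ in (g) -- rather than from genuine nonlinearity -- is exactly the point that justifies importing the cited lemma into the present setting.
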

For the proof, we refer to \cite{MR3519973}. 
\newline \textbf{\underline{Proof of Proposition \ref{mainprop1} and \ref{mainprop2}}}.
\par We will show only the proof for the case of assumptions (B1) because the (A1) case is simpler and follows only considering the scenario $s \geq 0$ in the (B1) case. Recall that we have $u_+=S^1_{u_-}(s)$ and $\sigma_\pm=\sigma_{u_-}(s)$. It suffices to prove Proposition \ref{mainprop2} as Proposition \ref{mainprop1} follows as the limit case $s=0$. Firstly, by assumption (B1) (f), there exists a unique parameter $s_0^* < s_{u_L}^{-\natural}$ such that
\begin{align*}
\sigma_{LR}=\sigma_{u_L}(s_0^*).
\end{align*}
To start, take $a^*$ sufficiently small so that Lemmas \ref{dissipationformula2} and \ref{closetoul} are satisfied. By taking $a^*$ smaller if needed and $\sigma_0$ closer to $\lambda_1(u_L)$, we may further have for any $u \in \Pi_a$
\begin{align}\label{casework}
    &\sigma_u(s) \leq \sigma_0 \text{ for } s \geq \frac{s_0}{2} \text{ or } s \leq \frac{s_u^{-\natural}-s_0^*}{2}, \\ \notag 
    &\lambda_1(u)-\sigma_0 < \frac{\kappa \Theta}{4C_1},
\end{align}
where $\kappa$ appears in Lemma \ref{dissipationformula2}, $\Theta$ appears in \eqref{theta}, and $C_1$ appears in \eqref{c1}. This is possible as $(s,u) \mapsto \sigma_u(s)$ is $C^1$, $\dv{}{s}\sigma_u(s)\neq 0$ for $s=0,s_u^{-\natural}$, and $\sigma_u(0)=\sigma_u(s_u^{-\natural})=\lambda_1(u)$ by assumptions (B1) (e). Let us first show Proposition \ref{mainprop2} for $s$ such that $\sigma_u(s) \leq \sigma_0$. Via the same process as in the proof of (4.41) in \cite{MR3519973}, we may obtain
\begin{align*}
q(u_+;u_R)-\sigma_\pm \eta(u_+|u_R) &\leq q(u_+|S^1_{u_-}(s_0))-\sigma_{u_-}(s)\eta(u_+|S^1_{u_-}(s_0)) \\
&+C|u_--u_L|^2(1+|\sigma_{u_-}(s)-\sigma_{u_-}(s_0)|) \\
&+C|u_--u_L||\sigma_{u_-}(s)-\sigma_{u_-}(s_0)|.
\end{align*}
Then, by Lemma \ref{dissipationformula2}, if $|s-s_0| \leq \delta$
\begin{align}\label{q1}
q(u_+;u_R)-\sigma_\pm \eta(u_+|u_R) &\leq -\kappa |\sigma_{u_-}(s)-\sigma_{u_-}(s_0)|^2 \\ \notag 
&+C|u_--u_L|^2(1+|\sigma_{u_-}(s)-\sigma_{u_-}(s_0)|) \\ \notag 
&+C|u_--u_L||\sigma_{u_-}(s)-\sigma_{u_-}(s_0)| \\ \notag 
& \leq C(|u_--u_L|^2+|u_--u_L|^4) \leq C|u_--u_L|^2,
\end{align}
where we have used Young's inequality and $|u_--u_L| \leq \sqrt{a^*} \leq 1$. If $|s-s_0| \geq \delta$, Lemma \ref{dissipationformula2} again yields
\begin{align}
q(u_+;u_R)-\sigma_\pm \eta(u_+|u_R) \leq C|u_--u_L|^2,
\end{align}\label{q2}
where we have taken $a^*$ much smaller if needed so that $|u_--u_L|^2 \leq Ca^* \ll \kappa$. Therefore, by \eqref{q1}, \eqref{q2}, and Lemma \ref{closetoul} we have
\begin{align}
q(u_+;u_R)-\sigma_\pm \eta(u_+|u_R) \leq aC|u_--u_L|^2.
\end{align}
So, by Lemmas \ref{closetoul} and \ref{quadraticentropy}, we have
\begin{align*}
D_{RH}(u_-, u_+) &\leq aC|u_--u_L|^2-q(u|u_L)+\sigma_\pm \eta(u_-|u_L) \\
&\leq aC|u_--u_L|^2-q(u|u_L)+\sigma_0 \eta(u_-|u_L) \\
&\leq aC|u_--u_L|^2-\beta \eta(u|u_L) \\
&\leq aC|u_--u_L|^2 -\beta c^*|u_--u_L|^2.
\end{align*}
Taking $a^*$ sufficiently small gives the result in this case.
\par Next, we handle the case $\sigma_u(s) > \sigma_0$. By \eqref{casework}, we then must have either $0 \leq s < \frac{s_0}{2}$ or $\frac{s_u^{-\natural}-s_0^*}{2} < s \leq s_u^{-\natural}$. As $\delta < \frac{s_0}{2}$, this implies $|s-s_0| \geq \delta$. Thus, using Lemma \ref{dissipationformula2} and taking $a^*$ sufficiently small
\begin{align*}
q(u_+;u_R)-\pm \eta(u_+|u_R) &\leq -\kappa|\sigma_{u_-}(s)-\sigma_{u_-}(s_0)| \\
&+C|u_--u_L|^2(1+|\sigma_{u_-}(s)-\sigma_{u_-}(s_0)|) \\
&+C|u_--u_L||\sigma_{u_-}(s)-\sigma_{u_-}(s_0)| \\
&\leq -\frac{\kappa}{2}|\sigma_{u_-}(s)-\sigma_{u_-}(s_0)|+C|u_--u_L|^2.
\end{align*}
Now, define 
\begin{align}\label{theta}
\Theta:=\inf_{u \in \Pi_a}(|\sigma_{u}(\frac{s_0}{2})-\sigma_u(s_0)|, |\sigma_u(\frac{s_u^{-\natural}-s_0^*}{2})-\sigma_u(s_0)|).
\end{align} 
By compactness of $\Pi_a$ and the continuity in assumption (B1) (e), we have $\Theta > 0$. Using assumption (B1) (e) and the smallness of $a^*$, we obtain
\begin{align}\label{thetabd}
q(u_+;u_R)-\sigma_\pm \eta(u_+|u_R) &\leq -\frac{\kappa \Theta}{4}.
\end{align}
Next, using the definition of $\Pi_a$ and Lemma \ref{closetoul} gives
\begin{align}\label{c1}
-q(u_-|u_L)+\sigma_\pm \eta(u_-|u_L) &\leq -q(u_-|u_L)+\lambda_1(u_-) \eta(u_-|u_L) \\ \notag 
&=-q(u_-|u_L)+\sigma_0 \eta(u_-|u_L)+(\lambda_1(u_-)-\sigma_0)\eta(u_-|u_L) \\ \notag 
& \leq (\lambda_1(u_-)-\sigma_0)\eta(u_-|u_L) \\ \notag 
&\leq a(\lambda_1(u)-\sigma_0)\eta(u_-|u_R) \\ \notag 
& \leq aC_1(\lambda_1(u)-\sigma_0).
\end{align}
Thus, by \eqref{thetabd} and \eqref{c1} we have
\begin{align*}
D_{RH}(u_-, u_+) \leq a\left(-\frac{\kappa \Theta}{4}+C_1(\lambda_1(u)-\sigma_0)\right).
\end{align*}
Using the bound \eqref{casework} gives the result. 

\section{Applications}\label{applications}
\subsection{Scalar conservation laws}\label{scl}
Our theory applies to the scalar conservation law:
\begin{align}\label{scalar}
u_t+(f(u))_x=0,
\end{align}
where $f: \R \to \R$ is the flux function which is either concave-convex or convex-concave in the sense of LeFloch \cite{MR1927887}. The case of concave-convex fluxes was treated in previous work of the author \cite{concaveconvex} (in fact a stronger result than Theorem \ref{main1} was proven in that special setting), so we will only address convex-concave fluxes here, i.e. fluxes which satisfy:
\begin{equation}\label{convconc}
\begin{cases}
uf''(u) < 0, & (u \neq 0), \\
f'''(0) \neq 0, \\
\lim_{|u| \to +\infty}f'(u)=+\infty.
\end{cases}
\end{equation}
The principal example is $f(u)=-u^3$. Here, the degenerate manifold is $\mathcal{M}_1=\{0\}$. Clearly, one may immediately apply Theorem \ref{main2} with any arbitrary strictly convex entropy to obtain $L^2$-stability for shocks of moderate strength. However, it is our aim in this section to obtain stability for shocks without a restriction on the size of the shock. For concreteness, fix $u_L > 0$. Then, the part of the shock curve $S^1_{u_L}(s)$ with $s > 0$ is $\{u|u > u_L\}$. We can show the following:
\begin{lemma}\label{main3}
Fix a shock $u_L, u_R$ with $u_R=S^1_{u_L}(s_0)$ for some $s_0 > 0$. Then, there exists an entropy $\overline{\eta}$ such that $\overline{\eta}$ verifies the assumptions (B1) and the parameter $\eps$ in Theorem \ref{main2} verifies $\eps > s_0$.
\end{lemma}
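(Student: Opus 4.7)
The plan is to leverage the flexibility in choosing the entropy that is available for scalar conservation laws in order to remove the moderate-strength restriction. In the scalar setting the shock curve reduces to $S^1_{u_L}(s)=u_L+s$, and any strictly convex $\overline{\eta}\in C^2(\V)$ (paired with $\overline{q}'=\overline{\eta}'f'$) furnishes an admissible entropy--entropy-flux pair for \eqref{scalar}. My first step is to observe that for such $\overline{\eta}$, Assumptions (B1) collapse to strict convexity alone: items (B1)(a),(c),(e),(i) concern only the flux $f$ and the geometry of the Rankine--Hugoniot curve together with the Lax characterization, and all follow from \eqref{convconc} as explained in Section \ref{scl}, independent of the entropy. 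The only remaining condition is (B1)(g), and a direct calculation gives
\begin{align*}
\dv{}{s}\overline{\eta}(u_L\,|\,u_L+s) \;=\; s\,\overline{\eta}''(u_L+s),
\end{align*}
which has the required sign on each half-line precisely when $\overline{\eta}''>0$.

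Next, I would track how $\eps$ actually enters the proof of Lemma \ref{dissipationformula2}: it is defined as the unique $\eps>0$ solving $\overline{\eta}(u_L\,|\,u_L+\eps)=\overline{\eta}(u_L\,|\,u^{-\natural})$, where $u^{-\natural}:=u_L+s_{u_L}^{-\natural}$. Since $s\mapsto \overline{\eta}(u_L\,|\,u_L+s)$ is strictly increasing on $s>0$ by the formula above, the condition $\eps>s_0$ is equivalent to the single strict inequality $\overline{\eta}(u_L\,|\,u_R) < \overline{\eta}(u_L\,|\,u^{-\natural})$. The crucial structural observation is the Taylor representation
\begin{align*}
\overline{\eta}(u_L\,|\,v) \;=\; \int_{\min(u_L,v)}^{\max(u_L,v)} \overline{\eta}''(z)\,|u_L-z|\,dz,
\end{align*}
so that $\overline{\eta}(u_L\,|\,u_R)$ is determined by $\overline{\eta}''$ on $[u_L,u_R]$ alone and $\overline{\eta}(u_L\,|\,u^{-\natural})$ by $\overline{\eta}''$ on $[u^{-\natural},u_L]$ alone. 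These intervals meet only at the single point $u_L$, so the two relative entropies can be tuned essentially independently.

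To finish, I would construct $\overline{\eta}$ by prescribing $\overline{\eta}''$: fix small $\alpha>0$ and large $\beta>0$, choose a smooth strictly positive profile $\phi$ on $\V$ with $\phi\equiv\alpha$ on $[u_L,u_R]$, $\phi\equiv\beta$ on $[u^{-\natural},u_L-\delta]$ for some small $\delta>0$, and smooth positive interpolation elsewhere; then set $\overline{\eta}(u):=\int_{u_*}^u(u-w)\phi(w)\,dw$ for an anchor $u_*\in\V$, which gives $\overline{\eta}''=\phi>0$. The representation above then yields
\begin{align*}
\overline{\eta}(u_L\,|\,u_R) \;=\; \frac{\alpha\,s_0^2}{2}, \qquad \overline{\eta}(u_L\,|\,u^{-\natural}) \;\geq\; \frac{\beta}{2}\bigl[(u_L-u^{-\natural})^2-\delta^2\bigr],
\end{align*}
and choosing $\alpha$ small enough relative to $\beta$ (with $\delta$ small) makes the strict inequality hold and hence delivers $\eps>s_0$.

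The main (mild) obstacle is conceptual rather than technical: one must correctly identify that the obstruction to large-data stability encoded in the proof of Lemma \ref{dissipationformula2} is purely an entropy-dependent one, namely the balance between $\overline{\eta}(u_L\,|\,u_R)$ and $\overline{\eta}(u_L\,|\,u^{-\natural})$. In the scalar case this balance can be dissolved by weighting $\overline{\eta}''$ away from the shock region, and because Theorem \ref{main2} invokes the entropy only through Assumptions (B1) and Lemma \ref{quadraticentropy}, the construction above is automatically compatible with the rest of the machinery of Sections \ref{gen}--\ref{proppf}.
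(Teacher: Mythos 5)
Your proposal is correct and follows essentially the same route as the paper: both identify that $\eps>s_0$ reduces to the single inequality $\overline{\eta}(u_L|u_R)<\overline{\eta}(u_L|S^1_{u_L}(s_{u_L}^{-\natural}))$, verify (B1)(g) via $\partial_v\overline{\eta}(u|v)=-\overline{\eta}''(v)(u-v)$, and then prescribe $\overline{\eta}''$ to be a small constant on $[u_L,u_R]$ while keeping it bounded below on the backward interval, exploiting that the two relative entropies see $\overline{\eta}''$ on intervals meeting only at $u_L$. The paper's specific profile ($\overline{\eta}''=1$ for $x\le 0$, $=\delta$ for $x\ge u_L$, linearly interpolated in between) is just one instance of your construction.
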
 
\begin{proof}
Recall from the proof of Lemma \ref{dissipationformula2} that $\eps$ is such that $\eta(u_L|S_{u_L}^1(\eps))=\eta(u_L|S_{u_L}^1(s_{u_L}^{-\natural}))$. So, we simply need to construct a strictly convex function $\overline{\eta}$ verifying the following properties (restricting to $u > 0$ for convenience)
\begin{enumerate}
    \item $\partial_v \overline{\eta}(u|v) > 0$ for $v >u$,
    \item $\partial_v \overline{\eta}(u|v) < 0$ for $v < u$,
    \item $\overline{\eta}(u_L|u_R) < \overline{\eta}(u_L|S_{u_L}^1(s_{u_L}^{-\natural}))$.
\end{enumerate}
We define $\overline{\eta}$ via its second derivative. Define
\begin{align*}
    \overline{\eta}''(x)=
    \begin{cases}
    1 & x \leq 0, \\
    \frac{(\delta-1)}{u_L}x+1 & 0 \leq x \leq u_L, \\
    \delta & x \geq u_L,
    \end{cases}
\end{align*}
for some parameter $\delta > 0$ to be determined. Evidently, $\overline{\eta}''$ is continuous and bounded between $\delta$ and $1$. Then, we compute
\begin{align*}
\partial_v \overline{\eta}(u|v)=-\overline{\eta}''(v)(u-v).
\end{align*}
Clearly we have $\partial_v \overline{\eta}(u|v) > 0$ for $v >u$ and $\partial_v \overline{\eta}(u|v) < 0$ for $v < u$. Finally, we compute:
\begin{align*}
\overline{\eta}(u_L|u_R)=\int_{u_L}^{u_R}\partial_v \overline{\eta}(u_L|v)\diff v=-\delta \int_{u_L}^{u_R}(u_L-v)\diff v=\frac{\delta}{2}(u_L-u_R)^2.
\end{align*}
Similarly
\begin{align*}
\overline{\eta}(u_L|S_{u_L}^1(s_{u_L}^{-\natural})) \geq \overline{\eta}(0|S^1_{u_L}(s_{u_L}^{-\natural})) \geq \frac{1}{2}|S_{u_L}^1(s_{u_L}^{-\natural})|^2.
\end{align*}
Taking $\delta$ sufficiently small grants the result.
\end{proof}
Consequently, applying Theorem \ref{main2} with the entropy $\overline{\eta}$ grants $L^2$-stability for the shock $(u_L, u_R)$. The figure below shows, for fixed $u_L > 0$, which shocks are admissible via the Lax condition \eqref{lax} and which shocks we obtain stability for.  We note that the shocks with $u_R$ in the yellow region are stable in $L^1$ via the theory of Kru\v{z}kov \cite{MR267257}. However, we are unable to obtain $L^2$-stability for these shocks. 
\begin{figure}[H]
    \centering
    \includegraphics{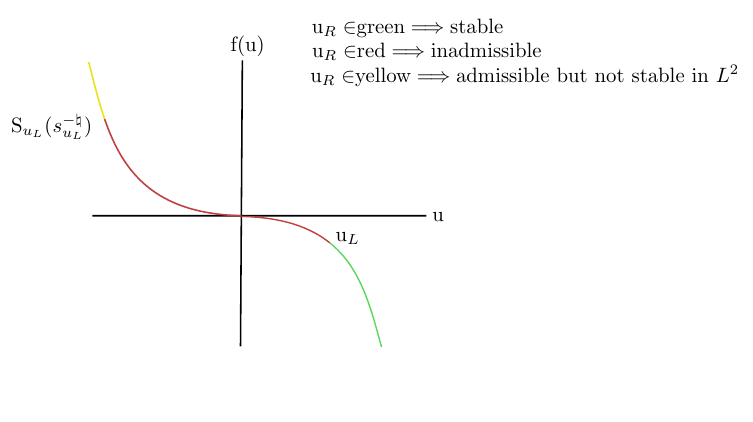}
    \caption{Stability in the scalar setting}
\end{figure}

\subsection{Nonlinear elastodynamics}\label{elastodynamics}
Our theory also applies to the system of nonlinear elastodynamics \eqref{elastsystem}:
\begin{align*}
\begin{cases}
w_t-v_x=0, & (t,x) \in \R^+ \times \R, \\
v_t+p(w)_x=0.
\end{cases}
\end{align*}
The functions $v$ and $w$ represent the velocity and deformation gradient respectively. The stress-strain law $p$ is assumed to have the form:
\begin{align*}
p(w)=-w^3-mw, \indent m>0.
\end{align*}
Define $c(w)=\sqrt{3w^2+m}$ and $u=(w,v)^T$. Then, system $\eqref{elastsystem}$ has eigenvalues and eigenvectors
\begin{align}\label{evalvec}
\lambda_{1,2}(u)=\mp c(w), \indent r_{1,2}(u)=(\pm 1, c(w))^T,
\end{align}
and the system has the strictly convex entropy
\begin{align*}
\eta(u)=\frac{v^2}{2}+\frac{1}{4}w^4+\frac{mw^2}{2}.
\end{align*}
We compute
\begin{align*}
\lambda_1'(u)\cdot r_1(u)=\frac{-3w}{c(w)}=\lambda_2'(u) \cdot r_2(u).
\end{align*}
This implies that $\mathcal{M}_1=\mathcal{M}_2=\{(w,v)|w=0\}$. Another elementary computation yields
\begin{align*}
(\lambda_1'(u)\cdot r_1(u))' \cdot r_1(u)=\frac{-3c(w)+3wc'(w)}{c(w)^2} < 0,
\end{align*}
so $\lambda_1$ is convex-concave. Similarly, we find that $\lambda_2$ is concave-convex. The shock curves, parameterized by $w$ instead of arc-length for convenience, are
\begin{align*}
&S^1_{u_L}(s)=(w_L+s,v) \text{ such that } v=v_L+\sqrt{\frac{-(p(s+w_L)-p(w_L))}{s}}(s), \\
&S^2_{u_R}(s)=(w_R+s,v) \text{ such that } v=v_R-\sqrt{\frac{-(p(s+w_R)-p(w_R))}{s}}(s), \\
\end{align*}
with corresponding speeds
\begin{align*}
&\sigma^{1}_{u_L}(s)=-\sqrt{\frac{-(p(s+w_L)-p(w_L))}{s}}, \\
&\sigma^2_{u_R}(s)=\sqrt{\frac{-(p(s+w_R)-p(w_R))}{s}}.
\end{align*}
It is tedious but straightforward to check that the assumptions (B1) are satisfied with $s_{u_L}^{\natural}=\frac{-3w_L}{2}$ and $s_{u_L}^{-\natural}=-3w_L$ (similarly for the $2$-shocks). Thus, we may apply Theorem \ref{main2} to obtain $L^2$-stability for both $1$- and $2$-shocks along the positive part of the shock curve of moderate strength (Theorem \ref{main4}).
The figures below show which shocks are admissible via the Lax condition \eqref{lax} and which shocks we obtain stability for.
 \begin{figure}[H]
     \centering
     \includegraphics{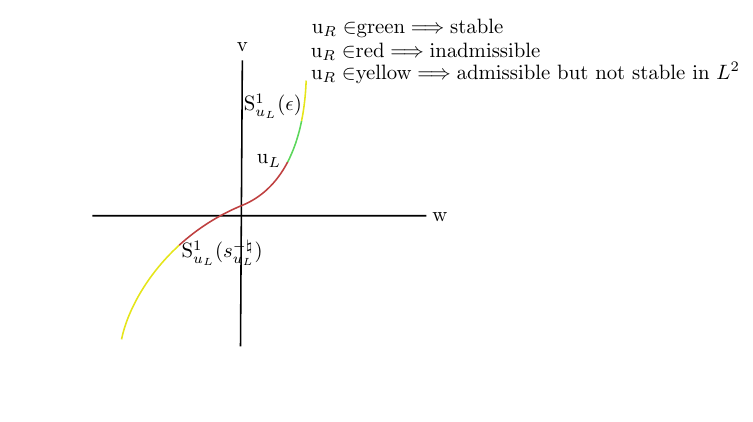}
     \caption{Stability for $1$-shocks}
 \end{figure}
 
 \begin{figure}[H]
     \centering
     \includegraphics{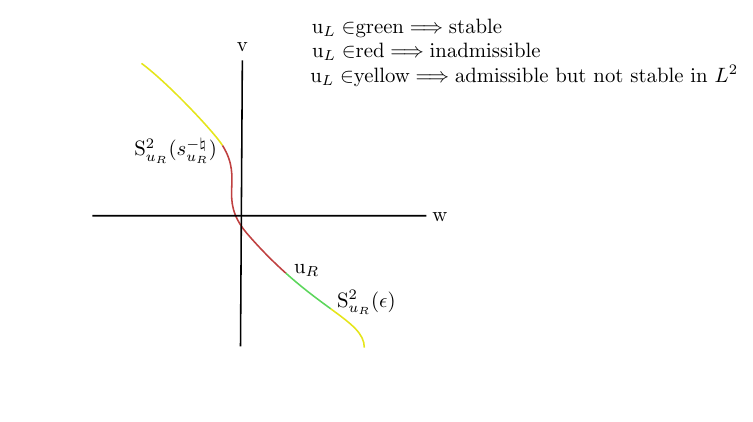}
     \caption{Stability for $2$-shocks}
 \end{figure}
Let us provide some commentary for these results. If, in Figure 2, $w_L$ is close to $0$, then the point $S^1_{u_L}(s^{-\natural}_{u_L})$ corresponding to the barrier between the yellow and red portions will also have $w$-coordinate close to $0$. Then, the small-BV $L^1$-stability theory of Ancona \& Marson \cite{MR2044745} gives $L^1$-stability for some of the shocks with $u_R=S^1_{u_L}(s)$ for $s \leq s^{-\natural}_{u_L}$ (but not all of them due to the small-BV restriction) that is not captured by Theorem \ref{main4}. 
\par However, if $w_L$ is not close to $0$, then the point $S^1_{u_L}(s^{-\natural}_{u_L})$ is also far from $0$. This implies that the parameter $\eps$ in Theorem \ref{main4} is not small either. Then, Theorem \ref{main4} gives $L^2$-stability for shocks $(u_L, S^1_{u_L}(s_0))$ for $s_0 \approx \eps$ that may not be captured by the small-BV $L^1$-stability theory.
\par Finally, for the special case of the system \eqref{elastsystem}, we conjecture that the moderate shock size condition for shocks along the positive part of the shock curve may be removed (as was done for the scalar case in Section \ref{scl}), but this remains an open question.

\bibliographystyle{plain}
\bibliography{refs}

\begin{thebibliography}{10}

\bibitem{MR1094433}
Rohan Abeyaratne and James~K. Knowles.
\newblock Kinetic relations and the propagation of phase boundaries in solids.
\newblock {\em Arch. Rational Mech. Anal.}, 114(2):119--154, 1991.

\bibitem{MR1876651}
Fabio Ancona and Andrea Marson.
\newblock A wavefront tracking algorithm for {$N\times N$} nongenuinely nonlinear conservation laws.
\newblock {\em J. Differential Equations}, 177(2):454--493, 2001.

\bibitem{MR2044745}
Fabio Ancona and Andrea Marson.
\newblock Well-posedness for general {$2\times2$} systems of conservation laws.
\newblock {\em Mem. Amer. Math. Soc.}, 169(801):x+170, 2004.

\bibitem{MR3338447}
Blake Barker, Heinrich Freist\"uhler, and Kevin Zumbrun.
\newblock Convex entropy, {H}opf bifurcation, and viscous and inviscid shock stability.
\newblock {\em Arch. Ration. Mech. Anal.}, 217(1):309--372, 2015.

\bibitem{MR2150387}
Stefano Bianchini and Alberto Bressan.
\newblock Vanishing viscosity solutions of nonlinear hyperbolic systems.
\newblock {\em Ann. of Math. (2)}, 161(1):223--342, 2005.

\bibitem{MR1816648}
Alberto Bressan.
\newblock {\em Hyperbolic systems of conservation laws}, volume~20 of {\em Oxford Lecture Series in Mathematics and its Applications}.
\newblock Oxford University Press, Oxford, 2000.
\newblock The one-dimensional Cauchy problem.

\bibitem{MR4661213}
Alberto Bressan and Camillo De~Lellis.
\newblock A remark on the uniqueness of solutions to hyperbolic conservation laws.
\newblock {\em Arch. Ration. Mech. Anal.}, 247(6):Paper No. 106, 12, 2023.

\bibitem{MR1489317}
Alberto Bressan and Philippe LeFloch.
\newblock Uniqueness of weak solutions to systems of conservation laws.
\newblock {\em Arch. Rational Mech. Anal.}, 140(4):301--317, 1997.

\bibitem{MR1757395}
Alberto Bressan and Marta Lewicka.
\newblock A uniqueness condition for hyperbolic systems of conservation laws.
\newblock {\em Discrete Contin. Dynam. Systems}, 6(3):673--682, 2000.

\bibitem{MR1723032}
Alberto Bressan, Tai-Ping Liu, and Tong Yang.
\newblock {$L^1$} stability estimates for {$n\times n$} conservation laws.
\newblock {\em Arch. Ration. Mech. Anal.}, 149(1):1--22, 1999.

\bibitem{MR4487515}
Geng Chen, Sam~G. Krupa, and Alexis~F. Vasseur.
\newblock Uniqueness and weak-{BV} stability for {$2\times 2$} conservation laws.
\newblock {\em Arch. Ration. Mech. Anal.}, 246(1):299--332, 2022.

\bibitem{concaveconvex}
Jeffrey Cheng.
\newblock L$^2$-stability \& minimal entropy conditions for scalar conservation laws with concave-convex fluxes.
\newblock {\em To appear in Quart. Appl. Math.}, 2025.

\bibitem{isothermal}
Jeffrey Cheng.
\newblock Uniqueness and weak-bv stability in the large for isothermal gas dynamics.
\newblock {\em Preprint on Arxiv}, 2025.

\bibitem{MR546634}
Constantine~M. Dafermos.
\newblock The second law of thermodynamics and stability.
\newblock {\em Arch. Rational Mech. Anal.}, 70(2):167--179, 1979.

\bibitem{MR523630}
Ronald~J. DiPerna.
\newblock Uniqueness of solutions to hyperbolic conservation laws.
\newblock {\em Indiana Univ. Math. J.}, 28(1):137--188, 1979.

\bibitem{MR1213739}
Heinrich Freist\"uhler and Tai-Ping Liu.
\newblock Nonlinear stability of overcompressive shock waves in a rotationally invariant system of viscous conservation laws.
\newblock {\em Comm. Math. Phys.}, 153(1):147--158, 1993.

\bibitem{MR194770}
James Glimm.
\newblock Solutions in the large for nonlinear hyperbolic systems of equations.
\newblock {\em Comm. Pure Appl. Math.}, 18:697--715, 1965.

\bibitem{MR4667839}
William~M. Golding, Sam~G. Krupa, and Alexis~F. Vasseur.
\newblock Sharp {$a$}-contraction estimates for small extremal shocks.
\newblock {\em J. Hyperbolic Differ. Equ.}, 20(3):541--602, 2023.

\bibitem{MR1475777}
Brian~T. Hayes and Philippe LeFloch.
\newblock Non-classical shocks and kinetic relations: scalar conservation laws.
\newblock {\em Arch. Rational Mech. Anal.}, 139(1):1--56, 1997.

\bibitem{MR1759195}
Brian~T. Hayes and Philippe Lefloch.
\newblock Nonclassical shocks and kinetic relations: strictly hyperbolic systems.
\newblock {\em SIAM J. Math. Anal.}, 31(5):941--991, 2000.

\bibitem{MR1718538}
Brian~T. Hayes and Michael Shearer.
\newblock Undercompressive shocks and {R}iemann problems for scalar conservation laws with non-convex fluxes.
\newblock {\em Proc. Roy. Soc. Edinburgh Sect. A}, 129(4):733--754, 1999.

\bibitem{MR4887752}
Feimin Huang, Yi~Wang, and Jian Zhang.
\newblock Time-asymptotic stability of composite waves of degenerate {O}leinik shock and rarefaction for non-convex conservation laws.
\newblock {\em Math. Ann.}, 392(1):1--46, 2025.

\bibitem{MR1318583}
Doug Jacobs, Bill McKinney, and Michael Shearer.
\newblock Travelling wave solutions of the modified {K}orteweg-de {V}ries-{B}urgers equation.
\newblock {\em J. Differential Equations}, 116(2):448--467, 1995.

\bibitem{MR3519973}
Moon-Jin Kang and Alexis~F. Vasseur.
\newblock Criteria on contractions for entropic discontinuities of systems of conservation laws.
\newblock {\em Arch. Ration. Mech. Anal.}, 222(1):343--391, 2016.

\bibitem{MR1303220}
Shuichi Kawashima and Akitaka Matsumura.
\newblock Stability of shock profiles in viscoelasticity with non-convex constitutive relations.
\newblock {\em Comm. Pure Appl. Math.}, 47(12):1547--1569, 1994.

\bibitem{MR4176349}
Sam~G. Krupa.
\newblock Criteria for the a-contraction and stability for the piecewise-smooth solutions to hyperbolic balance laws.
\newblock {\em Commun. Math. Sci.}, 18(6):1493--1537, 2020.

\bibitem{MR4184662}
Sam~G. Krupa.
\newblock Finite time stability for the {R}iemann problem with extremal shocks for a large class of hyperbolic systems.
\newblock {\em J. Differential Equations}, 273:122--171, 2021.

\bibitem{MR267257}
Stanislav~N. Kru\v{z}kov.
\newblock First order quasilinear equations with several independent variables.
\newblock {\em Mat. Sb. (N.S.)}, 81(123):228--255, 1970.

\bibitem{MR1219421}
Philippe LeFloch.
\newblock Propagating phase boundaries: formulation of the problem and existence via the {G}limm method.
\newblock {\em Arch. Rational Mech. Anal.}, 123(2):153--197, 1993.

\bibitem{MR1927887}
Philippe LeFloch.
\newblock {\em Hyperbolic systems of conservation laws}.
\newblock Lectures in Mathematics ETH Z\"urich. Birkh\"auser Verlag, Basel, 2002.
\newblock The theory of classical and nonclassical shock waves.

\bibitem{MR1856989}
Philippe LeFloch and Mai~Duc Thanh.
\newblock Nonclassical {R}iemann solvers and kinetic relations. {I}. {A} nonconvex hyperbolic model of phase transitions.
\newblock {\em Z. Angew. Math. Phys.}, 52(4):597--619, 2001.

\bibitem{MR2771666}
Nicholas Leger.
\newblock {$L^2$} stability estimates for shock solutions of scalar conservation laws using the relative entropy method.
\newblock {\em Arch. Ration. Mech. Anal.}, 199(3):761--778, 2011.

\bibitem{MR2807139}
Nicholas Leger and Alexis Vasseur.
\newblock Relative entropy and the stability of shocks and contact discontinuities for systems of conservation laws with non-{BV} perturbations.
\newblock {\em Arch. Ration. Mech. Anal.}, 201(1):271--302, 2011.

\bibitem{MR603391}
Tai~Ping Liu.
\newblock Admissible solutions of hyperbolic conservation laws.
\newblock {\em Mem. Amer. Math. Soc.}, 30(240):iv+78, 1981.

\bibitem{MR1240338}
Tai-Ping Liu.
\newblock Nonlinear stability and instability of overcompressive shock waves.
\newblock In {\em Shock induced transitions and phase structures in general media}, volume~52 of {\em IMA Vol. Math. Appl.}, pages 159--167. Springer, New York, 1993.

\bibitem{MR1324394}
Tai-Ping Liu and Kevin Zumbrun.
\newblock Nonlinear stability of an undercompressive shock for complex {B}urgers equation.
\newblock {\em Comm. Math. Phys.}, 168(1):163--186, 1995.

\bibitem{MR1362168}
Tai-Ping Liu and Kevin Zumbrun.
\newblock On nonlinear stability of general undercompressive viscous shock waves.
\newblock {\em Comm. Math. Phys.}, 174(2):319--345, 1995.

\bibitem{RMS1}
Tommaso Ruggeri, Augusto Muracchini, and Leonardo Seccia.
\newblock Continuum approach to phonon gas and shape changes of second sound via shock waves theory.
\newblock {\em Il Nuovo Cimento D}, 16:15--44, 1994.

\bibitem{MR1357378}
Michael Shearer and Yadong Yang.
\newblock The {R}iemann problem for a system of conservation laws of mixed type with a cubic nonlinearity.
\newblock {\em Proc. Roy. Soc. Edinburgh Sect. A}, 125(4):675--699, 1995.

\bibitem{MR683192}
Marshall Slemrod.
\newblock Admissibility criteria for propagating phase boundaries in a van der {W}aals fluid.
\newblock {\em Arch. Rational Mech. Anal.}, 81(4):301--315, 1983.

\bibitem{MR3283661}
Benjamin Texier and Kevin Zumbrun.
\newblock Entropy criteria and stability of extreme shocks: a remark on a paper of {L}eger and {V}asseur.
\newblock {\em Proc. Amer. Math. Soc.}, 143(2):749--754, 2015.

\bibitem{MR2508169}
Alexis~F. Vasseur.
\newblock Recent results on hydrodynamic limits.
\newblock In {\em Handbook of differential equations: evolutionary equations. {V}ol. {IV}}, Handb. Differ. Equ., pages 323--376. Elsevier/North-Holland, Amsterdam, 2008.

\bibitem{MR3537479}
Alexis~F. Vasseur.
\newblock Relative entropy and contraction for extremal shocks of conservation laws up to a shift.
\newblock In {\em Recent advances in partial differential equations and applications}, volume 666 of {\em Contemp. Math.}, pages 385--404. Amer. Math. Soc., Providence, RI, 2016.

\end{thebibliography}
\end{document}